\newtheorem{theorem}{Theorem}[section]
\newtheorem{lemma}[theorem]{Lemma}
\newtheorem{cor}[theorem]{Corollary}
\theoremstyle{definition}
\newtheorem{definition}[theorem]{Definition}
\theoremstyle{remark}
\newtheorem{remark}[theorem]{Remark}
\newcommand{\jerry}{}
\newcommand{\im}{\mathrm{Im}}
\newcommand{\Ker}{\mathrm{Ker}}
\newcommand{\Z}{\mathbb{Z}}
\newcommand{\idx}{i}
\newcommand{\I}{{I}}
\newcommand{\J}{{J}}
\newcommand{\K}{{K}}
\newcommand{\basis}{\beta}
\newcommand{\IK}{\mathrm{IK}}
\numberwithin{equation}{section}
\begin{document}

\title[Interval Decomposition over a PID]{
Interval Decomposition of Infinite Persistence Modules over a Principal Ideal Domain and Field Choice in Persistent Homology
}


\author{Jiajie Luo}
\address{Knowledge Lab, University of Chicago, Chicago, Illinois, United States of America}
\curraddr{}
\email{jerryluo8@uchicago.edu}
\thanks{}

\author{Gregory Henselman-Petrusek}
\address{Pacific Northwest National Laboratory, Richland, Washington, United States of America}
\curraddr{}
\email{gregory.roek@pnnl.gov}
\thanks{}

\subjclass[2020]{Primary MSC 62R40, MSC 55N31; Secondary MSC 55-08}

\date{}

\dedicatory{}

\commby{}

\begin{abstract}
We study pointwise free and finitely-generated persistence modules over a principal ideal domain, indexed by a (possibly infinite) totally-ordered poset category.
We show that such persistence modules admit interval decompositions if and only if every structure map has free cokernel. 
We also show that, in torsion-free settings, the integer persistent homology module of a filtration of topological spaces admits an interval decomposition if and only if the associated persistence diagram is invariant to the choice of coefficient field.
These results generalize prior work where the indexing category is finite.
\end{abstract}

\maketitle




\section{\textbf{Introduction}}

Let $R$ be a principal ideal domain (PID). 
A \emph{persistence module} with coefficients in $R$ is a functor $f:I\to R\mathrm{-Mod}$,
where the \emph{indexing category} $I$ is a totally-ordered poset category.
Persistence modules \jerry{\cite{edelsbrunner_topological_2002, ghrist_barcodes_2007, Polterovich2020}} arise from and have been widely studied in persistent homology, in which setting $R$ is typically a field and $f$ maps every index $i \in I$ to a finite-dimensional vector space over $R$. 
{In this setting, any persistence module admits an interval decomposition; that is, it can be decomposed as a direct sum of indecomposables known as ``interval modules''.} 
{Interval decompositions are key to persistent homology, in that they capture important topological structure within data.}
If one removes the condition that the coefficient ring $R$ be a field, then interval decompositions are no longer guaranteed to exist \jerry{\cite{oudot_persistence_2015, gabriel_unzerlegbare_1972}}. 

In this paper, we study persistence modules that are pointwise free and finitely-generated, in which the coefficient ring is a PID and the indexing category is any totally-ordered poset category.
We introduce necessary and sufficient conditions for such modules to decompose (see Section \ref{sec:background} for definitions):

\begin{theorem}
    [Main result]
    \label{thm:problem_statement}
    Let $f:I\to R\mathrm{-Mod}$ be a pointwise free and finitely-generated persistence module, where $I$ {may be} infinite and $R$ is a PID. Then $f$ admits an interval decomposition if and only if every  map $f(a\leq b)$ has free cokernel. 
\end{theorem}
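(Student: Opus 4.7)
The forward direction is a direct computation. If $f \cong \bigoplus_\alpha M_\alpha$ is a decomposition into interval modules, then $\Coker(f(a \leq b)) \cong \bigoplus_\alpha \Coker(M_\alpha(a \leq b))$. For a single interval module supported on an interval $J$, the cokernel of $M_\alpha(a\leq b)$ is either $0$ (when $a \in J$, or when $b \notin J$) or $R$ (when $a \notin J$ and $b \in J$); a direct sum of such summands is free over the PID $R$.

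For the reverse direction, my plan is to reduce to the finite case and then assemble the local decompositions globally. First, note that the cokernel-freeness hypothesis is inherited by every restriction $f|_J$, $J \subseteq I$, so by the cited finite-indexing result every finite restriction of $f$ admits an interval decomposition. The remaining task is to globalize this family of finite decompositions.

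To do so, I would apply Zorn's lemma to the poset $\mathcal{P}$ of pairs $(S, \mathcal{B})$ where $S \subseteq I$ is a subposet and $\mathcal{B}$ is a \emph{compatible interval basis} for $f|_S$: a family of bases $\mathcal{B}_i$ of $f(i)$ for each $i \in S$, together with an assignment $v \mapsto J_v$ of each basis element to a convex subset of $S$ containing $i$, such that every structure map $f(a \leq b)$ with $a,b \in S$ carries $v \in \mathcal{B}_a$ bijectively onto a basis element of $\mathcal{B}_b$ when $b \in J_v$, and to $0$ when $b \notin J_v$. With the extension ordering, chains have obvious unions as upper bounds, so Zorn produces a maximal element $(S_*, \mathcal{B}_*)$. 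From such a maximal element with $S_* = I$ one reads off the desired decomposition $f \cong \bigoplus_v k_{J_v}$.

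The main obstacle will be the extension step needed to show $S_* = I$: given $i_0 \in I \setminus S_*$, one must produce a compatible basis on $S_* \cup \{i_0\}$ that restricts to $\mathcal{B}_*$. The strategy would be to choose a finite subposet $T \subseteq S_*$ comprising enough indices near $i_0$, apply the finite case to $f|_{T \cup \{i_0\}}$, and splice the resulting interval basis into $\mathcal{B}_*$; finite rank of $f(i_0)$ ensures only finitely many existing basis elements of $\mathcal{B}_*$ interact with $f(i_0)$ through structure maps. The delicate point is that the local decomposition near $i_0$ must be chosen so as not to disturb $\mathcal{B}_*$ outside $T$. This will likely require exploiting the freedom to re-select complements to the kernels and images of the relevant structure maps, together with a uniqueness/rigidity statement for finite interval decompositions saying that the splice is forced up to an automorphism of the finite module supported on $T$ alone.
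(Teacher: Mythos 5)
Your forward direction matches the paper's Section~3 essentially verbatim and is fine. Your reverse direction also uses Zorn's lemma on a poset of pairs (subposet, basis family), which is the paper's overall strategy, so the skeleton is right — but there is a genuine gap in the definition you feed to Zorn, and the fix you sketch for it does not actually work.

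The gap: your ``compatible interval basis'' on $S_*$ records only how the chosen bases interact with structure maps \emph{among indices of $S_*$}. It imposes nothing about how those bases sit relative to kernels of maps $f(j\le a)$ and images of maps $f(c\le j)$ for indices $a,c \notin S_*$. When you try to adjoin a new index $i_0$, you may find that $\mathcal{B}_*$ at a nearby $j \in S_*$ is ``misaligned'' with $\ker f(j\le i_0)$ or $\mathrm{im}\, f(i_0 \le j)$ — e.g.\ no subset of $\beta_j$ spans that kernel. At that point no choice of basis at $i_0$ alone can repair the matching-matrix condition. Your proposed remedy — ``exploiting the freedom to re-select complements to the kernels and images of the relevant structure maps'' at indices of $T \subseteq S_*$ — is exactly what Zorn's lemma forbids: a valid extension must \emph{restrict} to $\mathcal{B}_*$ on $S_*$, not replace it. Re-selecting produces a different (possibly incomparable) element of the poset and does not contradict maximality of $(S_*,\mathcal{B}_*)$. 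Nor does a ``rigidity up to automorphism of the finite module'' statement help, since the automorphism would again move $\mathcal{B}_*$.

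The paper closes this gap by strengthening the object handed to Zorn. Its ``consistent basis of $f|_J$ \emph{with respect to} $f$'' additionally requires that, for each $j\in J$, the basis $\beta_j$ already contains a sub-basis for \emph{every} $\mathrm{Im}[i,j]$ and $\mathrm{Ker}[j,k]$ with $i,k$ ranging over all of $I$, not just $J$. This is a satisfiable demand precisely because there are only finitely many distinct such submodules of each $f_j$ once cokernels are free (the paper's Lemma~\ref{lemma:finite_imker_family}, proved via split exactness over a PID). Carrying this invariant along the chain guarantees that any maximal $(S_*,\mathcal{B}_*)$ has bases already aligned with whatever kernels and images the new index $i_0$ brings in, so the extension step (Theorem~\ref{thm:extension}) can construct $\beta_{i_0}$ by pushing forward, pulling back, and filling in complements, without touching $\mathcal{B}_*$. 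If you add this ``with respect to $f$'' condition to your poset $\mathcal{P}$, your outline becomes the paper's proof; without it, the extension step cannot in general be carried out.
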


The existence of interval decompositions for modules with \emph{field coefficients} has been widely attributed to Gabriel \cite{gabriel_unzerlegbare_1972}, 
whose work showed that indecomposable representations of quivers of type $A_n$ have only finitely many isomorphism classes, which coincide exactly with interval modules.
This result has since been generalized to persistence modules indexed by any totally-ordered set \cite{crawleybevey_2015}, as well as a small category \cite{botnan_crawleybevey_2020}.\footnote{\jerry{The results in \cite{crawleybevey_2015,botnan_crawleybevey_2020} had been stated earlier (without proof) by Gabriel and Ro{\u{\i}}ter \cite[Pages 30--32]{GabrielRoiter1992}.}}
{Interval decompositions have also been generalized to zigzag persistence.}
Carlson and de Silva \cite{carlsson_desilva_2010} showed that interval decompositions exist for finitely-indexed zigzag persistence modules; 
Botnan \cite{botnan_2017} later showed the same in the infinitely-indexed setting. 
\jerry{Igusa et al. \cite{IgusaRockTodorov2023} provided a proof for the more general continuous-zigzag setting, independent of \cite{botnan_2017}.}
Hang and Mio \cite{hang_correspondence_2023} generalized these results further, with interval decomposition of tame correspondence modules.

The problem of decomposing modules with \emph{PID coefficients} is motivated by growing mathematical and scientific interest in modules with non-field coefficients, such as in 
Conley index theory \cite{conley_integers, Luo2023}, generalized persistence diagrams \cite{patel_generalized_2018}, projective coordinatization \cite{de_silva_persistent_2009,perea_multiscale_2018}, and learned representations in neural networks \cite{walch_torsion_2025}.

However, this problem presents nontrivial structural challenges which are absent from the  works described above. First, {\emph{the endomorphism rings of indecomposable modules with PID coefficients are nonlocal}}, which complicates the approach of \cite{botnan_crawleybevey_2020}. In particular, Theorem 1.1 and Lemma 2.1 of that work bear heavily on the proof of interval decomposition \cite[Theorem 1.2]{botnan_crawleybevey_2020}, but all three results fail over a PID.  Second, \emph{arbitrary submodules of free $R$-modules need not have complements}, which complicates the approach of \cite{crawleybevey_2015}. For example, the limiting object $V_I^-$ lacks a complement to $V_I^+$ in general, which impedes the construction in \cite[Section 5]{crawleybevey_2015}, and creates additional challenges in \cite[Section 6]{crawleybevey_2015}.

To address these problems, we abstract certain aspects of the algebraic structure, focusing instead on the structure of subobject lattices. 
Specifically, we show that the functorial properties of the \emph{saecular lattice} \cite{Luo2023, ghrist_saecular_2021} {can be used} to build decompositions constructively from images and kernels.

Our work complements and builds on \cite{Luo2023}, which established necessary and sufficient conditions for interval decompositions of persistence modules over PIDs with \emph{finite} indexing categories. 
{The core element of \cite{Luo2023} is a \textbf{polynomial-time algorithm} to compute decompositions explicitly.}
{While key ideas in \cite{Luo2023} remain relevant, the generality of our indexing category requires nontrivial adaptation.}
\jerry{Roughly speaking, \cite{Luo2023} constructs an interval decomposition of $f:\{1,\ldots,m\}\to R\mathrm{-Mod}$ by recursively constructing a basis for each $f_i$ using the basis on $f_{i-1}$.
In the case where $I$ is infinite we would like to do something similar, but cannot proceed by exhaustion as in the finite case. 
Instead, we appeal to Zorn's lemma, in a manner similar to \cite{botnan_2017}. 
As in that work, we lean heavily on the fact that, for fixed $i$, the set of distinct submodules of form $\Ker(f(i \le j))$ (respectively, $\im(f(k \le k))$ has finite cardinality.}

\section{\textbf{Background and notation}}
\label{sec:background}

In this section, we present the necessary background for persistence modules and set up the notation of this paper. 
We also provide a brief discussion on finitely-indexed persistence modules, which will be relevant to prove the main result, Theorem \ref{thm:problem_statement}.
See \cite{ghrist_barcodes_2007,Polterovich2020,oudot_persistence_2015} for more on persistence modules and homology. 

\subsection{Persistence modules}

Let $R$ be a ring and $I$ a totally-ordered poset category.

\begin{definition}
    A \emph{persistence module} indexed by $I$ with coefficients in $R$ is a functor $f:I\to R\mathrm{-Mod}$ from $I$ to the category of $R$-modules. 
\end{definition}
In other words, $f$ consists of (1) a family $\{f_i\}_{i\in I}$ of $R$-modules and (2) morphisms (which we call \emph{structure maps}) $f(i\leq j):f_i\to f_j$ whenever $i\leq j$, in which $f(i\leq i)=\mathrm{id}_{f_i}$ for all $i\in I$ and $f(i\leq k)= f(j\leq k)\circ f(i\leq j)$ whenever $i\leq j\leq k$. 

For a persistence module $f:I\to R\mathrm{-Mod}$, we denote $f_a$ as $f$ evaluated at $a$, and $f(a\leq b):f_a\to f_b$ as the structure map between $f_a$ and $f_b$. 

\

\noindent\fbox{\parbox{\dimexpr\linewidth-2\fboxsep-2\fboxrule}{
    For the remainder of this work, unless otherwise stated, $f$ will denote a persistence module $f:I\to R\mathrm{-Mod}$ which is  pointwise free and finitely-generated. 
    That is, for each $i\in I$, we have that $f_i$ is a finitely-generated free $R$-module.

    \vspace{1pt}

    We will assume, without loss of generality, that $I$ has a minimal and maximal element, which we denote $-\infty$ and $\infty$, respectively, and that $f_{-\infty}=f_\infty=0$. 
    This assumption simplifies our analysis without loss of generality.
}}

\

\begin{definition}
    Let $f,g:I\to R\mathrm{-Mod}$ be persistence modules. 
    We say that $\phi:f\to g$ is a \emph{morphism} between persistence modules if $\phi$ is a natural transformation. 
    That is, $\phi$ consists of a family $\{\phi_i\}_{i\in I}$ of maps such that for all $a,b\in I$ where $a\leq b$, we have $\phi_b\circ f(a\leq b) = g(a\leq b)\circ \phi_a$.
    An \emph{isomorphism} between persistence modules is a morphism with an inverse. 
\end{definition}

\begin{definition}
    Let $I$ be a totally-ordered poset. 
    An \emph{interval} is a subset $J\subseteq I$ such that for all $a,b\in J$ and $c\in I$ such that $a\leq c\leq b$, we have $c\in J$. 
\end{definition}

\begin{definition}
    An \emph{interval module} is a persistence module of the form $h^J:I\to R\mathrm{-Mod}$, where $J$ is an interval,
\begin{align*}
h^J_a = \begin{cases}
    R & a\in J \\
    0 & \text{otherwise}\,,
\end{cases}
&&
h^J(a\leq b) = \begin{cases}
    id_R & a,b\in J \\
    0 & \mathrm{otherwise}\,.
\end{cases}
\end{align*}
An \emph{interval submodule} of $f$ is any persistence submodule that is isomorphic to an interval module. 
An \emph{interval decomposition} of $f$ is a direct-sum decomposition in which each summand is an interval submodule.
\end{definition}

    

\begin{remark}
    $f$ admitting an interval decomposition is equivalent to the condition that, for all $a \le b$, the matrix representation of $f(a \le b)$ with respect to bases $\beta_a$ and $\beta_b$ is a matching matrix\footnote{A \emph{matching matrix} is  a matrix with coefficients in $\{0,1\}$, where each row (respectively, each column) contains at most one nonzero entry}.
\end{remark}

Much of our discussion will focus on images and kernels, so we will use some convenient shorthand for these modules:

\begin{definition}\label{def:imker}
	Fix $a\in I$. For $x,y\in I$, 
	we define \jerry{the following submodules of the module $f_a$:} 
    \begin{align*}
\mathrm{Ker}[a,y]=\begin{cases}
    \ker(f(a\leq y))\,, & a\leq y \\ 
    0\,, & \text{otherwise}\,,
\end{cases}
&&
\mathrm{Im}[x,a]=\begin{cases}
    \im(f(x\leq a))\,, & x\leq a \\
    f_a\,, & \text{otherwise}\,.
\end{cases} 
\end{align*}
    We also define
    $$
    \IK_a[x,y]=\im[x,a]\cap\Ker[a,y].
    $$
\end{definition}

\begin{remark}
    For $y_1,y_2\in I$ such that $y_1\leq y_2$, we have $\Ker[a,y_1]\subseteq \Ker[a,y_2]$. 
    Similarly, for $x_1,x_2\in I$ such that $x_1\leq x_2$, we have $\im[x_1,a]\subseteq \im[x_2,a]$.
\end{remark}

\begin{definition}
    Let $M$ be a $R$-module, and let $A,B\subseteq M$ be submodules such that $A\subseteq B$. 
    We say that $A$ has a \emph{complement} in $B$ if we can find a submodule $C\subseteq B$ such that $A\oplus C=B$. 
    We refer to $C$ as a complement of $A$ in $B$. 
\end{definition}

\begin{lemma}\label{lemma:finite_imker_family}
	If $f$ is pointwise free and finitely-generated, then for a fixed $a\in I$, the family $\{\Ker[a,y]\}_{y\in I}$ of kernels out of $f_a$ is finite. 
	Similarly, if the cokernels of structure maps are free, then the family $\{\im[x,a]\}_{x\in I}$ of images into $f_a$ is finite.  
\end{lemma}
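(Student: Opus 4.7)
The plan is to show that every submodule appearing in either family is a direct summand of $f_a$, and then to exploit the fact that any chain of direct summands of a finitely generated free module over a PID has strictly increasing rank, hence is finite.

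First, I would observe that each $\Ker[a,y]$ is a direct summand of $f_a$: since $R$ is a PID, the submodule $\im(f(a\leq y)) \subseteq f_y$ of a free module is itself free, so the short exact sequence $0 \to \Ker[a,y] \to f_a \to \im(f(a\leq y)) \to 0$ splits. For the image family, the hypothesis that cokernels of structure maps are free directly yields a splitting of $0 \to \im[x,a] \to f_a \to \Coker(f(x\leq a)) \to 0$, so $\im[x,a]$ is also a direct summand of $f_a$.

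Next, I would establish the key rank inequality: if $M_1 \subsetneq M_2$ are both direct summands of $f_a$, then $\rank(M_1) < \rank(M_2)$. Writing $f_a = M_1 \oplus C$ and applying the modular law (valid since $M_1 \subseteq M_2$) gives $M_2 = M_1 \oplus (M_2 \cap C)$. Strict containment forces $M_2 \cap C \neq 0$, so $\rank(M_2) \geq \rank(M_1) + 1$.

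Combining these: both families are chains in the submodule lattice of $f_a$, totally ordered by inclusion (since $I$ is totally ordered and the families are monotone in $y$ and $x$ respectively, as recorded in the preceding remark). By the rank inequality, the set of distinct submodules in each chain injects into $\{0, 1, \ldots, \rank(f_a)\}$, hence is finite. The only substantive obstacle is conceptual: recognizing that the direct-summand property is exactly what makes rank a strictly monotone invariant on such chains. Without it, chains in the submodule lattice of a finitely generated free module over a PID can be infinite (e.g.\ $\Z \supsetneq p\Z \supsetneq p^2 \Z \supsetneq \cdots$), which is why the free-cokernel hypothesis is genuinely necessary for the image statement whereas the kernel statement comes for free from the PID structure.
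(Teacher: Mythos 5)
Your proof is correct, and it takes a slightly different route from the paper's, one that is arguably cleaner. For the kernel family, the paper's argument works \emph{locally along the chain}: given $a < y_1 \le y_2$, it constructs the short exact sequence $0 \to \Ker[a,y_1] \to \Ker[a,y_2] \to Z \to 0$ with $Z \subseteq f_{y_1}$ free, concluding that consecutive members of the chain admit relative complements, and then appeals to finite rank of $f_a$. Your argument instead establishes a \emph{global} property: each $\Ker[a,y]$ is itself a direct summand of $f_a$ via the splitting of $0 \to \Ker[a,y] \to f_a \to \im(f(a\le y)) \to 0$ (using that $\im(f(a\le y))$ is a submodule of the free module $f_y$, hence free over a PID). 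You then invoke an explicit rank-monotonicity lemma for chains of direct summands, proved via the modular law, which makes the ``infinite chain implies infinite rank'' step fully rigorous rather than implicit. For the image family, both you and the paper invoke the free-cokernel hypothesis, but again you package it globally ($\im[x,a]$ is a summand of $f_a$) rather than via the quotient $\im[x_2,a]/\im[x_1,a]$ embedded in $\coker(f(x_1\le a))$. Your approach buys a uniform treatment of both families and isolates the reusable fact that chains of direct summands of a finitely generated free module over a PID have strictly increasing rank; the paper's approach is marginally shorter in that it never needs to state that lemma separately. Your closing remark correctly identifies the asymmetry in hypotheses: the kernel half is free, the image half is not, and your $\Z \supsetneq p\Z \supsetneq p^2\Z \supsetneq \cdots$ example shows why some hypothesis beyond mere freeness of the modules is needed.
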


We will use the following fact: if $M$ is an $R$-module, and $A\subseteq B\subseteq M$ where $A$ and $B$ are free, then $B/A$ is free if and only if $A$ has a complement in $B$.

\begin{proof}[Proof of Lemma \ref{lemma:finite_imker_family}]
	We first show that the family $\{\Ker[a,y]\}_{y\in I}$ of kernels out of $f_a$ is finite. 
	Let $y_1,y_2\in I$ such that $a< y_1\leq y_2$. 
    Then we have a short exact sequence 
    \[
    0 \to \Ker[a,y_1] \hookrightarrow  \Ker[a,y_2] \xrightarrow{f(a \le y_1)} Z \to 0
    \]
    for some $Z \subseteq f_{y_1}$. Because $f_{y_1}$ is free, so too is $Z$. 
    Therefore the sequence splits; in particular $\Ker[a,y_1]$ has a complement in $\Ker[a,y_2]$. 
    If there exists an infinite sequence $\Ker[a,y_1] \subsetneq \Ker[a, y_2] \subsetneq \cdots $, then $f_a$ would therefore be infinite-rank --- a contradiction. 
    Thus, the family $\{\Ker[a,y]\}_{y\in I}$ of kernels out of $f_a$ is finite.

	
	We now show that the family $\{\im[x,a]\}_{x\in I}$ of images into $f_a$ is finite. 
	Let $x_1,x_2\in I$ be such that $x_1\leq x_2<a$.
    Then we have a short exact sequence \[
    0 \to \im[x_1, a] \hookrightarrow \im[x_2, a] \xrightarrow{} Z \to 0
    \]
    where $Z = \im[x_2,a] / \im[x_1,a] \subseteq f_a / \im[x_1,a]$. 
    The quotient $f_a / \im[x_1,a]$ is the cokernel of $f(x_1 \le a)$, so it is free by hypothesis. 
    This implies that $Z$ is also free, so the sequence splits, which yields $\im[x_2,a] = \im[x_1,a] + A$ for some complement $A$. Therefore, if there exists infinitely many distinct submodules of form $\im[x,a]$,  the module $f_a$ would have infinite rank --- a contradiction. 
    Thus, the family $\{\im[x,a]\}_{x\in I}$ of images into $f_a$ is finite.
%
\end{proof}

\subsection{A brief discussion on finitely-indexed persistence modules}\label{sec:finitely-indexed_review}

We briefly review relevant results on interval decompositions of finitely-indexed persistence modules with PID coefficients. 
See \cite{Luo2023} for a more thorough discussion.

Consider the finite totally-ordered poset category $\mathcal{I}=\{0,\ldots,n\}$ with the usual ordering, and let $f:\mathcal{I}\to R\mathrm{-Mod}$ be a persistence module that is pointwise free and finitely-generated, where $R$ is a PID.

If the structure maps of $f$ all have free cokernels, 
then \cite[Theorem 17]{Luo2023} states that, for each pair $(p,q)$ of integers such that $1 \le p, q \le n$, there exits a (non-unique) submodule $A^{pq}_a \subseteq f_a$ such that
\begin{align*}
\IK_a[p,q]
= &
A^{pq}_a
\oplus
\Bigl(
\IK_a[p-1,q]+\IK_a[p,q-1]
\Bigr)
\end{align*}
Theorem \cite[Theorem 18]{Luo2023} states that, independent of one's choice of $A^{pq}_a$, one has

\begin{align}
\bigoplus_{p,q} A_{a}^{pq} &= f_a\,, \quad
\bigoplus_{q \leq y} A_a^{pq} = \Ker[a, y]\,, \quad
\bigoplus_{p \leq x} A_a^{pq} = \mathrm{Im}[x, a]\,, 
\label{eq:decomp_a_finite}
\end{align}
where $p,q\in\{1,\ldots,n\}$.


In the context of persistent homology, a submodule $A_a^{pq}\subseteq f_a$ in this decomposition can be thought of as cycles that are born at $p$ and die at $q$. 
This is because (1) whenever $a\notin[p,q)$, we must have $A_a^{pq}=0$ (see \cite[Lemma 15]{Luo2023}), and (2) one can choose submodules $A^{pq}_a$ such that $f(a\leq b)(A_a^{pq})=A_b^{pq}$. 
That is, for any choice of $A_a^{pq}$, the image $f(a\leq b)(A_a^{pq})$ is a complement of 
$\IK_b[p-1,q]+\IK_b[p,q-1]$ in $\IK_b[p,q]$
(see \cite[Lemma 16]{Luo2023}).

\section{\textbf{Necessity}}
\label{sec:necesity}

Here we prove the necessity direction of Theorem \ref{thm:problem_statement}. 

\begin{theorem}\label{thm:necessity}
    Let $f:I\to R\mathrm{-Mod}$ be a persistence module that is pointwise free and finitely-generated, where $R$ is a PID. 
    If $f$ admits an interval decomposition, then every structure map $f(a\leq b)$ has free cokernel.   
\end{theorem}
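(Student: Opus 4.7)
The plan is to prove the implication directly: assuming $f$ has an interval decomposition, I will read the cokernel off the decomposition and show it is free.

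Fix $a \le b$ in $I$ and write $f \cong \bigoplus_{\lambda \in \Lambda} h^{J_\lambda}$ via some isomorphism $\phi$. Let $\Lambda_a = \{\lambda \in \Lambda : a \in J_\lambda\}$ and $\Lambda_b = \{\lambda \in \Lambda : b \in J_\lambda\}$. Since $f_a \cong \bigoplus_{\lambda \in \Lambda_a} R$ is finitely-generated, $\Lambda_a$ is finite; likewise $\Lambda_b$ is finite. Because each $J_\lambda$ is an interval and $a \le b$, the intersection $\Lambda_a \cap \Lambda_b$ is exactly the set of $\lambda$ with $\{a,b\} \subseteq J_\lambda$.

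Next, I would analyze the structure map componentwise. Naturality of $\phi$ intertwines $f(a \le b)$ with $\bigoplus_\lambda h^{J_\lambda}(a \le b)$; on the $\lambda$-summand this is $\mathrm{id}_R$ if $\lambda \in \Lambda_a \cap \Lambda_b$ and zero otherwise. Therefore the image of $f(a \le b)$ corresponds to the direct summand $\bigoplus_{\lambda \in \Lambda_a \cap \Lambda_b} R$ of $f_b \cong \bigoplus_{\lambda \in \Lambda_b} R$, and so the cokernel is isomorphic to $\bigoplus_{\lambda \in \Lambda_b \setminus \Lambda_a} R$, which is manifestly a free $R$-module.

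No serious obstacle arises here. The only subtle point is that $\Lambda$ may be infinite, but pointwise finite generation immediately reduces the analysis at any fixed pair $a \le b$ to the finite sets $\Lambda_a$ and $\Lambda_b$. After that, everything is a routine direct-sum computation transported across the isomorphism $\phi$, and freeness of the cokernel follows because a direct sum of copies of $R$ is free.
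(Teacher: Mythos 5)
Your proof is correct and takes essentially the same approach as the paper: both read off the cokernel of $f(a\le b)$ directly from the interval decomposition, noting that each interval summand contributes either a surjective (identity) map or a zero map so that the cokernel is a direct sum of copies of $R$. The paper phrases this as ``cokernel commutes with direct sums of persistence modules''; you make the same point more explicitly by tracking the index sets $\Lambda_a$, $\Lambda_b$.
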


\begin{proof}
    It is readily checked that for any persistence modules $h_1,h_2:I\to R\mathrm{-Mod}$, we have $\text{coker}\Bigl( (h_1 \oplus h_2)  (a\le b) \Bigr ) \cong \mathrm{coker} \Bigl( h_1(a\le b) \Bigr) \oplus \mathrm{coker} \Bigl( h_2(a\le b)  \Bigr)$.
    The cokernel of every structure map of an interval module is free, as the image is either $R$ or $0$.
    Thus, the cokernel of every direct sum of interval modules is also free.
\end{proof}

\section{\textbf{Sufficiency}}
\label{sec:sufficiency}

{We now prove sufficiency of Theorem \ref{thm:problem_statement}, which is the following.}

\begin{theorem}\label{thm:sufficiency}
    Let $R$ be a PID and $f:I\to R\mathrm{-Mod}$ be a persistence module that is pointwise free and finitely-generated. 
    Suppose every structure map $f(a\leq b)$ has free cokernel.   
    Then $f$ admits an interval decomposition.
\end{theorem}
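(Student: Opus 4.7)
The plan is to reduce to the finitely-indexed setting of Section \ref{sec:finitely-indexed_review}, using Lemma \ref{lemma:finite_imker_family} to control the relevant local data. At each $a \in I$, the families $\{\Ker[a,y]\}_{y \in I}$ and $\{\im[x,a]\}_{x \in I}$ are finite, so the sublattice of $f_a$ generated by images and kernels is finite. I would therefore choose, for each $a$, a decomposition $f_a = \bigoplus_{p,q} A_a^{pq}$ as in \cite[Theorems 17, 18]{Luo2023}, applied to the restriction of $f$ to a finite subposet of $I$ containing witnesses for each distinct $\im[x,a]$ and $\Ker[a,y]$. The PID and free-cokernel hypotheses are inherited by the restriction, so the finite-indexed theorem applies and yields the splitting \eqref{eq:decomp_a_finite}.

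The next step is to promote these pointwise choices into coherent summands. Using the compatibility recorded at the end of Section \ref{sec:finitely-indexed_review}, namely that $f(a \le b)(A_a^{pq})$ is again a complement of $\IK_b[p-1,q] + \IK_b[p,q-1]$ in $\IK_b[p,q]$, I would argue that one can pick the family $\{A_a^{pq}\}_{a \in I}$ so that $f(a \le b)(A_a^{pq}) = A_b^{pq}$ whenever both are defined. A natural tool is Zorn's lemma on the poset of partial coherent choices, with the finite-rank bound from Lemma \ref{lemma:finite_imker_family} ensuring that maximal elements are defined everywhere on $I$; alternatively, the functoriality of the saecular lattice alluded to in the introduction should permit a more constructive assembly from the image and kernel data.

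Once coherent $A_a^{pq}$ are in hand, each family $\{A_a^{pq}\}_{a \in I}$ is supported on an interval $J_{pq} \subseteq I$, since monotonicity of $\im[\cdot,a]$ and $\Ker[a,\cdot]$ together with the finite-case vanishing \cite[Lemma 15]{Luo2023} forces the support to lie between the birth index $p$ and the death index $q$. On $J_{pq}$ the structure maps restrict to isomorphisms between the $A_a^{pq}$'s (surjectivity from coherence, injectivity because elements of $A_a^{pq}$ do not die before $q$), so a free basis chosen at any single $a_0 \in J_{pq}$ and transported along structure maps partitions $\{A_a^{pq}\}_{a \in J_{pq}}$ into rank-one interval submodules. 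Summing over $(p,q)$ then yields the desired interval decomposition of $f$.

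The main obstacle is the coherence step: the finite-case theorem guarantees only \emph{existence} of complements $A_a^{pq}$, not canonicity, and over a possibly uncountable totally-ordered $I$ it is not a priori clear that local choices can be assembled into a globally consistent family. In particular, one must verify that the image $f(a \le b)(A_a^{pq})$ remains a valid complement --- which uses the free-cokernel hypothesis to guarantee that the relevant submodules split --- and that the extension process behaves at limit points of $I$. Handling this rigorously, together with a clean global indexing of the pairs $(p,q)$ that is independent of the finite restriction chosen at each $a$, is where the adaptation from the finite to the infinite setting is genuinely nontrivial.
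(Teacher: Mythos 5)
Your high-level strategy — use Lemma \ref{lemma:finite_imker_family} to reduce to finite subposets, invoke \cite[Theorems 17--18]{Luo2023} for local splittings, and then assemble a global family via Zorn's lemma together with transport of summands along structure maps — is the same skeleton as the paper's proof. However, the central step, which you correctly flag as ``genuinely nontrivial'' and then leave open, is exactly where all of the paper's work lives, and your sketch as written does not close it. A Zorn argument only produces a maximal partial family; you must show that a maximal family is defined on all of $I$, i.e., you need an \emph{incremental extension} result: given a coherent family over $J \subsetneq I$ and $a \notin J$, the family extends to $J \cup \{a\}$. This cannot be dispatched merely by observing that $f(a \le b)(A_a^{pq})$ is again a valid complement, because $a$ may be flanked on both sides by indices of $J$, and one must simultaneously be compatible with the image coming from the nearest $j < a$ and with the pullback of the chosen summand at the nearest $j > a$. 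The paper does this via a four-case construction of $\beta_a^{pq}$ (push forward from $i_s$, pull back from $i_t$ inside the appropriate image-kernel submodule, or take an arbitrary complement when neither constraint applies), and the pullback case requires a split short exact sequence (Lemma \ref{lemma:short_exact_sequence}) to show that a lift into $\IK_a[i_p,i_q]$ actually produces a complement; none of this is present in your outline.

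Two further issues. First, the entries in your Zorn poset need to carry more data than ``coherent families of summands over $J$'': the paper's Definition \ref{def:bases} requires a consistent basis \emph{with respect to $f$}, meaning each $\beta_j$ must contain a basis for every $\im[i,j]$ and $\Ker[j,k]$ with $i,k$ ranging over all of $I$, not just over $J$. Without building this requirement into the poset, the extension step fails: a newly introduced $a$ imposes image and kernel constraints at old indices $j \in J$ that your previously chosen $A_j^{pq}$ need not respect. Second, your ``clean global indexing of $(p,q)$'' concern is real, but the paper side-steps it rather than solving it: the index set $S$ is chosen anew for each $a$ (with the additional requirement that it maximize overlap with $J$), and consistency is then verified as a matching-matrix condition for every pair of indices in $J \cup \{a\}$ separately (Theorem \ref{thm:matching_matrix_general}), rather than by fixing one global $(p,q)$-labeling. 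Your proposed route of carving $I$ into support intervals $J_{pq}$ and transporting a single basis along structure maps would, if made rigorous, require exactly this machinery; as written it assumes the coherence that the machinery is there to produce.
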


\begin{proof}
    This result depends on observations developed below, synthesized here into a complete proof.
    Suppose that the persistence module $f$ has free cokernels. 
    Lemma \ref{lemma:zorn} implies that $f$ admits a consistent basis (see Definition \ref{def:bases}) if, for every  proper subset $J \subsetneq I$ such that $\{-\infty, \infty\} \subseteq J$ and every consistent basis $(\beta_j)_{j \in J}$ with respect to $f$, the basis can be extended by one element $\beta_a$, for $a \notin J$. 
    This condition holds by Theorem \ref{thm:extension}. Therefore a consistent basis exists.
\end{proof}

\fbox{
\parbox{0.9\textwidth}{

Throughout this section, we 
\begin{itemize}
    \item fix a PID $R$, our coefficent ring;
    \item fix a persistence module $f:I\to R\mathrm{-Mod}$ that is \jerry{nonzero (i.e., $f_i\neq0$ for some $i$) and} pointwise free and finitely-generated; 
    \item assume that every structure map 
    has free cokernel.
\end{itemize}

}
}
\\

\subsection{Consistent bases and Zorn's condition} We begin by establishing some common language to talk about bases for the modules of interest.

\begin{definition}
\label{def:bases}
\;
\begin{itemize}
    \item The \emph{restriction} of $f$ to $J$ is the restriction of the functor $f$ to the full subcategory $J \subseteq I$.  
    We denote this object by $f|_J$.  
    \item A \emph{consistent basis} for $f$ is an indexed family $(\basis_i)_{i \in I}$ of bases such that the matrix representation of every structure map with respect to these bases is a matching matrix. (Note that $f$ has a consistent basis if and only if $f$ admits an interval decomposition; we use these terminologies interchangeably.)
    \item A \emph{consistent basis of $f|_J$ \textbf{with respect to $f$}} is a consistent basis $(\basis_j)_{j \in J}$ for $f|_J$, with the additional condition that, for each $j \in J$, the basis $\basis_j$ contains a basis for the submodules $\im[i,j]$ and $\Ker[j,k]$, for all $i,k\in \I$. 
\end{itemize}
\end{definition}

We address Theorem \ref{thm:sufficiency} by examining restrictions of $f$ and their consistent bases with respect to $f$.

\begin{lemma}[Zorn's condition]
	\label{lemma:zorn}
    The persistence module $f$ has a consistent basis if the following condition holds: 
    For any set $J$ such that (i) $\{-\infty, \infty\} \subseteq J \subsetneq I$ and (ii) $f|_\J$ admits a consistent basis $(\basis_j)_{j \in \J}$ with respect to $f$, there exists an element $a \in \I \setminus \J$  and a basis $\basis_{a}$ for $f_{a}$ such that ``appending'' $\basis_{a}$ to $(\basis_i)_{i \in J}$ produces a consistent basis $(\basis_i)_{i \in \J\cup\{a\}}$ of $f|_{\J \cup \{a\}}$ with respect to $f$.
\end{lemma}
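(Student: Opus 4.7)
The plan is to apply Zorn's Lemma to a poset of ``partial'' consistent bases, using the hypothesized extension property to force any maximal element to span all of $\I$. Specifically, I would introduce the poset $\mathcal{P}$ whose elements are pairs $(\J, (\basis_j)_{j \in \J})$ with $\{-\infty, \infty\} \subseteq \J \subseteq \I$ and $(\basis_j)_{j \in \J}$ a consistent basis of $f|_\J$ with respect to $f$, ordered by extension: $(\J_1, (\basis_j^1)_{j \in \J_1}) \preceq (\J_2, (\basis_j^2)_{j \in \J_2})$ iff $\J_1 \subseteq \J_2$ and $\basis_j^1 = \basis_j^2$ for every $j \in \J_1$. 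The poset is nonempty because $\J_0 = \{-\infty, \infty\}$ with $\basis_{-\infty} = \basis_\infty = \emptyset$ trivially satisfies the required conditions (recall $f_{-\infty} = f_\infty = 0$).

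The next step is to verify that every chain $\{(\J_\alpha, (\basis_j^\alpha))\}_\alpha$ in $\mathcal{P}$ admits an upper bound. The natural candidate sets $\J^\star = \bigcup_\alpha \J_\alpha$ and $\basis_j^\star = \basis_j^\alpha$ for any $\alpha$ such that $j \in \J_\alpha$ (well-defined by the extension order). To confirm $(\J^\star, (\basis_j^\star)) \in \mathcal{P}$, I would verify both parts of Definition \ref{def:bases}: (i) for any $a \leq b$ in $\J^\star$, some single stage $\J_\alpha$ in the chain contains both indices, so the matrix of $f(a\leq b)$ with respect to $\basis_a^\star, \basis_b^\star$ inherits the matching property from $(\J_\alpha, (\basis_j^\alpha))$; and (ii) for each $j \in \J^\star$, the basis $\basis_j^\star$ contains bases for $\im[i,j]$ and $\Ker[j,k]$ for all $i,k \in \I$, since this requirement was already imposed on $\basis_j^\alpha$ at any stage $\alpha$ with $j \in \J_\alpha$.

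Once an upper bound is in hand, Zorn's Lemma produces a maximal element $(\J^\star, (\basis_j^\star)) \in \mathcal{P}$. It remains to argue $\J^\star = \I$: were $\J^\star$ a proper subset of $\I$, the hypothesis of the lemma would yield some $a \in \I \setminus \J^\star$ and a basis $\basis_a$ extending $(\J^\star, (\basis_j^\star))$ to a strictly larger element of $\mathcal{P}$ --- contradicting maximality. Hence $\J^\star = \I$ and $(\basis_j^\star)_{j \in \I}$ is the sought consistent basis for $f$.

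The main subtlety I expect is condition (ii) in the chain-union step. Condition (i) is ``pairwise'' and is immediately inherited from whichever stage contains both indices $a$ and $b$, but condition (ii) is a ``local'' property at a single index $j$ that refers to the \emph{entire} indexing set $\I$ rather than the stages of the chain. Once one recognizes that this locality is precisely what makes the property pass to the union, the remainder of the argument is standard Zorn-style bookkeeping.
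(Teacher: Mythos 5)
Your proof is correct and follows essentially the same Zorn's-lemma argument as the paper: same poset of partial consistent bases, same order by extension, same contradiction from maximality. The only difference is that you spell out the chain-union verification (conditions (i) and (ii) of Definition~\ref{def:bases}) in detail, whereas the paper simply asserts the upper bound exists.
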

\begin{proof}
	We apply Zorn's lemma. 
	Let $P$ be the set whose elements are pairs $(\K, (\beta_k)_{k \in \K})$ such that $\{-\infty, \infty\} \subseteq \K \subseteq \I$ and $(\beta_k)_{k \in \K}$ is a consistent basis of $f|_\K$ with respect to $f$. Impose a partial order on $P$ such that $(\K, (\beta_k)_{k \in \K}) \le (\K', (\beta'_k)_{k \in \K'})$ if and only if $\K \subseteq \K'$ and $\beta_k = \beta'_k$ for all $k \in \K$. 
    Note that $P$ is nonempty, as we can take $K=\{-\infty,\infty\}$ with $\beta_{-\infty} = \beta_\infty = \emptyset$ (recall that $f_{-\infty}=f_\infty = 0$ by convention).
	Every chain in this poset clearly has an upper bound in $P$, as we can take the union of indexing sets. 
	Therefore, by Zorn's lemma, $P$ contains a maximal element. 
	
    Suppose, now, that the condition holds. 
    Then $(\K, (\beta_k)_{k \in \K})$ can only be maximal in $P$ if $\K = \I$, because if $\K\subsetneq \I$, we can find $a\in\I\setminus\K$ and a basis $\beta_{a}\subseteq f_{a}$ such that $(\K\cup\{a\},(\beta_k)_{k\in \K\cup\{a\}})\in P$. 
	Therefore $f$ has a consistent basis, as desired.
\end{proof}

\subsection{Partitioning consistent bases with images and kernels}
\label{sec:basis_partition}

We now partition a consistent basis $\beta_j$ into disjoint subsets $\beta_j^{pq}$, where each subset contains basis elements with the same ``birth values'' and ``death values''.

\begin{lemma}\label{lemma:basis_subset_span}
    Let $M$ be a free and finitely-generated $R$-module, and let $\beta\subseteq M$ be a basis. 
    Let $\mathcal{M}_\beta$ be the family of submodules of $M$ that can be written as the span of a subset of $\beta$. 
    That is 
    $\mathcal{M}_\beta = \{ \mathrm{span}(\gamma) : \gamma\subseteq \beta\}$.
    The family $\mathcal{M}_\beta$ is closed under sum and intersection. 
\end{lemma}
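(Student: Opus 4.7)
The plan is to verify closure under each operation separately, exploiting the uniqueness of basis expansions. For sums, the argument is essentially formal: given two subsets $\gamma_1, \gamma_2 \subseteq \beta$, I would observe that $\mathrm{span}(\gamma_1) + \mathrm{span}(\gamma_2) = \mathrm{span}(\gamma_1 \cup \gamma_2)$, and since $\gamma_1 \cup \gamma_2 \subseteq \beta$, the resulting submodule lies in $\mathcal{M}_\beta$.

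For intersections, I would prove the stronger equality
\[
\mathrm{span}(\gamma_1) \cap \mathrm{span}(\gamma_2) = \mathrm{span}(\gamma_1 \cap \gamma_2).
\]
The inclusion $\supseteq$ is immediate. For $\subseteq$, take $x \in \mathrm{span}(\gamma_1) \cap \mathrm{span}(\gamma_2)$. Because $\beta$ is a basis of the free module $M$, the element $x$ has a unique expansion $x = \sum_{b \in \beta} c_b b$ with only finitely many nonzero coefficients $c_b \in R$. The membership $x \in \mathrm{span}(\gamma_1)$ provides a second expansion supported on $\gamma_1$; by uniqueness of coordinates in $\beta$, I can conclude $c_b = 0$ for all $b \in \beta \setminus \gamma_1$. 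The symmetric argument using $x \in \mathrm{span}(\gamma_2)$ forces $c_b = 0$ for all $b \in \beta \setminus \gamma_2$. Hence the support of $x$ lies in $\gamma_1 \cap \gamma_2$, so $x \in \mathrm{span}(\gamma_1 \cap \gamma_2)$.

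There is no real obstacle here: the lemma is essentially a bookkeeping consequence of the fact that coordinates in a basis are unique. The only subtlety worth flagging is that the argument genuinely requires $\beta$ to be a basis (not merely a spanning set), since uniqueness of the expansion drives both directions. The finite-generation and free hypotheses on $M$ are used only insofar as they guarantee that $\beta$ is a well-defined basis with unique coordinate representations; the argument does not rely on $R$ being a PID.
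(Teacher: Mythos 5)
Your proof is correct and follows essentially the same route as the paper's: both reduce the claim to the identities $\mathrm{span}(\gamma_1)+\mathrm{span}(\gamma_2)=\mathrm{span}(\gamma_1\cup\gamma_2)$ and $\mathrm{span}(\gamma_1)\cap\mathrm{span}(\gamma_2)=\mathrm{span}(\gamma_1\cap\gamma_2)$, the paper dismissing the verification as "readily checked" after normalizing $\beta$ to the standard basis of $R^n$, while you make the uniqueness-of-coordinates argument explicit. The two are the same proof at different levels of detail.
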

\begin{proof}
    Without loss of generality, we can assume $M=R^n$, and $\beta$ consists of standard unit vectors. 
    Let $N_1,N_2\in\mathcal{M}_\beta$, in which $N_1=\mathrm{span}(\gamma_1)$ and $N_2=\mathrm{span}(\gamma_2)$. 
    It is readily checked that $N_1 + N_2 = \mathrm{span}(\gamma_1 + \gamma_2)$ and $N_1\cap N_2 = \mathrm{span}(\gamma_1\cap\gamma_2)$. 
\end{proof}

Lemma \ref{lemma:basis_subset_span} implies the following property for consistent bases.

\begin{cor}\label{cor:basis_subset_basis}
    If a basis $\beta_j$ for $f_j$ contains a basis for every image into and kernel out of $f_j$, then $\beta_j$ also contains a basis for any submodule of $f_j$ that can be obtained by taking sums and intersections of submodules of form $\im[i,j]$ and $\Ker[j,k]$. 
\end{cor}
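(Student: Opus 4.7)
The plan is to deduce the corollary directly from Lemma \ref{lemma:basis_subset_span} by tracking membership in the family $\mathcal{M}_{\beta_j} = \{\mathrm{span}(\gamma) : \gamma \subseteq \beta_j\}$. The hypothesis on $\beta_j$ translates immediately into the statement that every image $\im[i,j]$ and every kernel $\Ker[j,k]$ belongs to $\mathcal{M}_{\beta_j}$, while the conclusion to be proved is that every submodule built from these by sums and intersections also belongs to $\mathcal{M}_{\beta_j}$. This is exactly the closure property already established.

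First I would unpack the phrase ``contains a basis for'' as saying that for each $i$ there is $\gamma_{i,j} \subseteq \beta_j$ with $\mathrm{span}(\gamma_{i,j}) = \im[i,j]$, and similarly for each $\Ker[j,k]$; by construction these submodules lie in $\mathcal{M}_{\beta_j}$. I would then proceed by induction on the number of sum/intersection operations used to form a submodule $N$ from this starting collection: the base case is the previous sentence, and the inductive step is a single application of Lemma \ref{lemma:basis_subset_span}, which tells us $\mathcal{M}_{\beta_j}$ is closed under both operations. Finally, once $N = \mathrm{span}(\gamma)$ for some $\gamma \subseteq \beta_j$, linear independence of $\gamma$ is automatic (as a subset of a basis), so $\gamma$ is itself a basis of $N$, contained in $\beta_j$, which is what the corollary claims.

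There is no genuine obstacle here; the result is essentially a bookkeeping consequence of Lemma \ref{lemma:basis_subset_span}. The only mildly delicate point is to interpret ``any submodule of $f_j$ that can be obtained by taking sums and intersections'' as the finite inductive closure of the images and kernels under $+$ and $\cap$, rather than some infinitary construction. Since by Lemma \ref{lemma:finite_imker_family} there are only finitely many such images and kernels, and $\mathcal{M}_{\beta_j}$ is itself finite (bounded by $2^{|\beta_j|}$), this reading is unambiguous and the induction above is well-defined.
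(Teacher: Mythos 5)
Your proposal is correct and matches the paper's (implicit) argument: the corollary is presented there as an immediate consequence of Lemma \ref{lemma:basis_subset_span}, and your write-up simply makes explicit the induction on the number of sum/intersection operations, together with the observation that any subset of a basis spanning a submodule is itself a basis of that submodule. The remark that finiteness of the relevant family (via Lemma \ref{lemma:finite_imker_family}) disambiguates the phrase ``sums and intersections'' is a sensible clarification, though not strictly needed since each submodule in question is formed by a finite expression by definition.
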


Fix $J \subsetneq I$ and $a \in I \setminus J$.
\jerry{Fix a finite set $S_J$ such that $\{a \} \subseteq S_J \subseteq J \cup \{a\}$, which ``covers'' all images and kernels of $J$ in $f_a$, in the sense that $\{\im[j,a]: j \in J\cup\{a\}\} = \{\im[s,a]: s \in S_J\}$ and $\{\Ker[a,j]: j \in J\cup\{a\}\} = \{\Ker[a,s]: s \in S_J\}$. Such a finite set exists, by Lemma \ref{lemma:finite_imker_family}.  Next, fix a finite set $S$  such that $S_J \subseteq S \subseteq I$, which covers all images and kernels of $I$ in $f_a$, in the sense that $\{\im[i,a]: i \in I\} = \{\im[s,a]: s \in S\}$ and $\{\Ker[a,i]: i \in I\} = \{\Ker[a,s]: s \in S\}$. Place the elements of $S$ into an ordered sequence $i_1 < \cdots < i_r$.}


Supposing that one exists, fix a consistent basis $(\beta_j)_{j \in J}$ for $f|_J$ with respect to $f$. For each $j \in J$, define functions $\mathcal{K}: f_j \to \Z$ by $ \mathcal{K}(v) = \min \{ q : v \in \Ker[j, \idx_q]\}$ and $\mathcal{I}: f_j \to \Z$ by $\mathcal{I}(v) = \min \{ p : v \in  \im[\idx_p, j]\}$. 
Then, for all $j \in J$ and all $(p,q) \in \{1, \ldots, r\} \times \{1, \ldots, r\}$, define
\begin{align*}
    \beta^{pq}_{j} : = \{ v \in \beta_{j} : (\mathcal{I}(v),\mathcal{K}(v)) = (p,q) \}\,, 
    &&
    A^{pq}_j  : = \textrm{span}( \beta^{pq}_j)\,.
\end{align*}

Several observations are immediate, for any $j \in J$.
\begin{itemize}
    \item Basis $\beta_j$ partitions as a disjoint union $\beta_j = \bigsqcup_{p,q} \beta_j^{pq}$.
    \item Consequently, $f_j = \bigoplus_{p,q}A^{pq}_j$.
    \item For each $\idx_n$ the set $\bigcup_{q \le n}\beta^{pq}_j = \{ v \in \beta_j : \mathcal{K}({v}) \le n\} = \beta_j \cap \Ker[j,\idx_n]$ forms a basis for $\Ker[j,\idx_n]$. Recall that $\beta_j$ contains a basis for $\Ker[j,\idx_n]$ by hypothesis (see Definition \ref{def:bases}).
    \item Similarly, for each $\idx_n$, the set $\bigcup_{p \le n}\beta^{pq}_j = \{ v \in \beta_j : \mathcal{I}({v}) \le n\} = \beta_j \cap \im[\idx_n, j]$ forms a basis for $\im[\idx_n,j]$. Recall that $\beta_j$ contains a basis for $\im[\idx_n, j]$ by hypothesis (see Definition \ref{def:bases}). 
    \item If $V \subseteq f_j$ is any submodule which can be obtained via a finite sequence of sums and intersections of submodules of the form $\im[\idx_m,j]$ and $\Ker[j, \idx_n]$, then $V \cap \beta_j$ is a disjoint union of subsets of form ${\beta}^{pq}_j$. 
    Moreover, $V \cap \beta_j$ is a basis for $V$. This observation follows from Lemma \ref{lemma:basis_subset_span} and Corollary \ref{cor:basis_subset_basis}.
    
    \item If $X$ and $Y$ are disjoint subsets of $\{1, \ldots, r\} \times \{1, \ldots, r\}$, then $\textrm{span}(\bigsqcup_{(p,q) \in X \cup Y} \beta^{pq}_j) = \textrm{span}(\bigsqcup_{(p,q) \in X} \beta^{pq}_j) \oplus \textrm{span}(\bigsqcup_{(p,q) \in Y} \beta^{pq}_j)$
    \item As a special case, we have 
    \begin{align}
        \IK_j[\idx_p,\idx_q]
         & =   
         \textrm{span} \left ( \bigcup_{m \le p, n \le q} \beta_j^{pq} \right ) \nonumber
         \\
         & = 
         \textrm{span}(\beta^{pq}_j)
         \oplus 
         \textrm{span}\Biggl( 
            \left(  \cup_{m < p, n \le q} \beta_j^{pq} \right )  
            \bigcup 
            \left(  \cup_{m \le p, n < q} \beta_j^{pq} \right )            
        \Biggr )\nonumber
         \\
         & = 
         \textrm{span}(\beta^{pq}_j)
         \oplus 
         \Biggl( 
            \textrm{span}\left(  \cup_{m < p, n \le q} \beta_j^{pq} \right )  
            +
            \textrm{span}\left(  \cup_{m \le p, n < q} \beta_j^{pq} \right )            
        \Biggr )\nonumber
        \\
        & = 
        A^{pq}_j
        \oplus 
        \Bigl( 
            \IK_j[\idx_{p-1},\idx_q] + \IK_j[\idx_p,\idx_{q-1}]
        \Bigr) \label{eq:A_j^pq_complement}
    \end{align}
\end{itemize}

\subsection{Incremental Extension}

Lemma \ref{lemma:zorn} implies that, to show Theorem \ref{thm:sufficiency}, it is enough to prove the following.

\begin{theorem}[Incremental Extension]
\label{thm:extension}
    Let $J\subsetneq I$ be a proper subset containing $-\infty$ and $\infty$, and suppose $f|_J$ admits a consistent basis $(\beta_j)_{j\in J}$ with respect to $f$. 
    Then, for each $a\in I\setminus J$, there exists a basis $\beta_{a}\subseteq f_a$ such that $(\beta_j)_{j\in J\cup\{a\}}$ forms a consistent basis for $f|_{J\cup\{a\}}$ with respect to $f$. 
\end{theorem}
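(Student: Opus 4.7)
The plan is to reduce this extension problem to the finitely-indexed setting of Section \ref{sec:finitely-indexed_review}. Fix any $a \in I \setminus J$, and (using Lemma \ref{lemma:finite_imker_family}) form the finite index set $S = \{i_1 < \cdots < i_r\} \subseteq I$ containing $a$, with $S \setminus \{a\} \subseteq J$ maximally, as described at the beginning of Section \ref{sec:basis_partition}. For each $j \in J \cap S$, the partition $\beta_j = \bigsqcup_{p,q}\beta_j^{pq}$ and the submodules $A_j^{pq} := \textrm{span}(\beta_j^{pq})$ then satisfy the complement identity (\ref{eq:A_j^pq_complement}).

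The crux is to construct, for each pair $(p,q)$ with $i_p \le a \le i_q$, a complement $A_a^{pq}$ of $\IK_a[i_{p-1},i_q]+\IK_a[i_p,i_{q-1}]$ in $\IK_a[i_p,i_q]$, together with a basis $\beta_a^{pq}$, so that the structure maps $f(j\le a)$ and $f(a\le j)$ act as matching matrices between the already-fixed $\beta_j$ and the new $\beta_a = \bigsqcup_{p,q}\beta_a^{pq}$, for all $j \in J$. The approach is constructive. For each $j \in J$ with $j \le a$ and each $v \in \beta_j^{pq}$ with $i_q \ge a$, the monotonicity of images and kernels places the image $f(j\le a)(v)$ in $\IK_a[i_p,i_q]$; dually, for $j \ge a$ and each $v \in \beta_j^{pq}$ with $v \in \im[a,j]$, any preimage of $v$ in $\im[i_p,a]$ lies in $\IK_a[i_p,i_q]$. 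The set $\beta_a^{pq}$ will consist of such transferred images and chosen preimages, together with whatever additional generators are needed (supplied by the finitely-indexed theorem \cite[Theorems~17--18]{Luo2023} applied to $f|_S$) to complete the span to $A_a^{pq}$.

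The verification will then have three parts: (i) $\bigsqcup_{p,q}\beta_a^{pq}$ forms a basis for $f_a$, with $A_a^{pq}$ satisfying the analog of (\ref{eq:A_j^pq_complement}) at $a$; (ii) $\beta_a$ contains bases for all $\im[x,a]$ and $\Ker[a,y]$ with $x, y \in I$, which follows from the observations in Section \ref{sec:basis_partition} applied at $a$, together with the choice of $S$; and (iii) the matrix of each structure map $f(j\le a)$ (respectively $f(a\le j)$) with respect to $\beta_j$ and $\beta_a$ is matching. Claim (iii) is forced by the dichotomy at each $v \in \beta_j^{pq}$: either $v$ ``dies before reaching'' $a$ (i.e., $i_q < a$, and then $v \in \Ker[j,i_q] \subseteq \Ker[j,a]$ forces $f(j\le a)(v)=0$), or it survives (i.e., $i_q \ge a$, and by construction its image lies in $\beta_a^{pq}$); an analogous dichotomy governs $f(a\le j)$.

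The hard part will be ensuring \emph{consistency} of the candidate basis elements across different $j \in J$: the transferred images $f(j\le a)(\beta_j^{pq})$ and chosen preimages must be simultaneously linearly independent modulo $\IK_a[i_{p-1},i_q]+\IK_a[i_p,i_{q-1}]$, and must agree where they overlap (i.e., when the same persistence class is witnessed from several points of $J$). Addressing this requires exploiting that $(\beta_j)_{j\in J}$ is already a consistent basis on $J$, so the relevant elements are compatible under structure maps internal to $J$, together with the fact that the image of a complement under a structure map is again a complement (cf.\ \cite[Lemma~16]{Luo2023}). The freshly added generators supplied by the finitely-indexed theorem, corresponding to persistence classes witnessed only at indices $i_t \in S \setminus J$, must then be chosen to avoid interfering with the transferred ones; this is where the free-cokernel hypothesis is essential, as it guarantees the needed complements exist at every stage of the construction.
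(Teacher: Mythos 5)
Your overall strategy matches the paper's: reduce to the finite index set $S$, partition each $\beta_j$ by birth/death pairs $(p,q)$, and build $\beta_a$ from $\beta_a^{pq}$ pieces satisfying the complement identity~\eqref{eq:A_j^pq_complement} at $a$, with matching-matrix compatibility enforced via the dichotomy you describe. The reduction to $f|_S$ and the use of \cite[Theorems 17--18]{Luo2023} for the genuinely new generators are exactly right.

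However, your proposal stops short precisely where the real difficulty lies, and you flag it yourself: you envision gathering transferred images $f(j\le a)(\beta_j^{pq})$ from \emph{all} $j\le a$ in $J$ and preimages from all $j\ge a$, and then say ``the hard part will be ensuring consistency of the candidate basis elements across different $j\in J$'' without establishing it. That acknowledgment is a gap, not a proof step; it is also unnecessary work. The paper avoids the reconciliation problem entirely by fixing the two \emph{nearest} useful indices --- $\idx_s\in S\cap J$ the largest below $a$ and $\idx_t\in S\cap J$ the smallest above $a$ --- and defining each $\beta_a^{pq}$ from exactly one source: pushed forward from $\beta_{\idx_s}^{pq}$ if the birth $\idx_p\le\idx_s$, pulled back from $\beta_{\idx_t}^{pq}$ (via a preimage chosen inside $\IK_a[\idx_p,\idx_q]$) if $\idx_s<\idx_p\le a<\idx_t<\idx_q$, and chosen arbitrarily via \cite[Theorem 17]{Luo2023} if the interval $[\idx_p,\idx_q]$ is entirely ``invisible'' to $J$. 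Consistency with every other $j\in J$ then follows for free by composing through $\idx_s$ or $\idx_t$ (the factorizations $f(c\le a)=f(\idx_s\le a)\circ f(c\le\idx_s)$ and $f(a\le b)=f(b\le\idx_t)^{-1}$--style arguments exploited in Corollary~\ref{cor:matching_matrix} and Theorem~\ref{thm:matching_matrix_general}), rather than by a separate pairwise compatibility argument over all of $J$. You also mix push-forward and pull-back within a single $(p,q)$ cell (``transferred images and chosen preimages, together with whatever additional generators are needed''), which would require showing those contributions don't collide; the paper's four-case construction makes each cell come from one and only one source, so no such collision can arise. To complete your proof, replace the multi-$j$ transfer with the single-nearest-index construction, or supply the omitted compatibility argument (which will in any case reduce to factoring through $\idx_s$ and $\idx_t$).
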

\begin{proof}
    In the discussion below we construct a basis $\beta_a$, and show that $(\beta_j)_{j\in J\cup\{a\}}$ forms a consistent basis for $f|_{J\cup\{a\}}$ with respect to $f$ (Theorem \ref{thm:consistent_basis}).
\end{proof}



The remainder of this section is devoted to the details of the proof of Theorem \ref{thm:extension}. 
We will first construct the desired basis $\beta_a \subseteq f_a$ by patching together images and inverse images of subsets of the bases $\{\beta_j\}_{j\in J}$, which already exist over $J$. 
We will then show that $\beta_a$ has the required properties.

Fix a subset $J$ such that $\{-\infty, \infty\} \subseteq J \subsetneq I$, and choose some $a\notin J$.
Choose a (finite) index set $S = \{\idx_1, \ldots, \idx_r\}$ as directed in Section \ref{sec:basis_partition}. 
Additionally, choose indices $s < k< t$ such that 
(i) $\idx_t\in J\cap S$ is the smallest index such that $\idx_t>a$,
(ii) $\idx_s\in J\cap S$ is the largest index such that $\idx_s<a$, and
(iii) $\idx_k = a$.
Note that $\idx_s$ and $\idx_t$ must exist because $J$ contains $-\infty$ and $\infty$.

\begin{remark}
    Because $S$ is finite, the results from \cite{Luo2023} apply to our analysis of $f|_S$.
\end{remark}




Partition $\beta_j$ into disjoint subsets $\beta_j^{pq} \subseteq \beta_j$, and define $A_j^{pq} = \textrm{span}(\beta^{pq}_j) \subseteq f_j$ as directed in Section \ref{sec:basis_partition}, for each $j \in J$. 
  Intuitively, $\beta_j^{pq}$ represents a subset of $\beta_j$ consisting of basis vectors with the same ``birth value'' (represented by the integer $p$) and ``death value'' (represented by the integer $q$).

Now define corresponding subsets $\beta^{pq}_a \subseteq f_a$ as follows.  
See Figure \ref{fig:cases_timeline_picture} for reference.
\begin{enumerate}
    \item If $i_p>a$ or $i_q\leq a$, we define $\beta^{pq}$ to be empty. This makes intuitive sense, because $p$ and $q$ represent ``birth values'' and ``death values'', respectively; no vectors in $f_a$ are born later than $a$ or die before $a$.
    \item For $\idx_p \le \idx_s$ we will define $\beta^{pq}_a = f(\idx_s \leq a)(\beta^{pq}_{\idx_s})$. 
    That is, if the ``birth value''  $i_p$ occurs before $i_s$, then we obtain a set of basis vectors $\beta^{pq}_a$ by pushing forward some basis vectors in $\beta_{i_s}$. 
\end{enumerate}
The remaining case is $i_s < i_p \le a$. We divide this into the two subcases where $i_t < i_q$ and $i_t \ge i_q$, as follows:
\begin{enumerate}
    \setcounter{enumi}{2}
    \item For $i_s< i_p \leq a < i_t < i_q$, we define $\beta^{pq}_a \subseteq f(a \leq \idx_t)^{-1}(\beta^{pq}_{\idx_{t}})$ by choosing one representative from the preimage of each basis element $v \in \beta^{pq}_{\idx_s}$. 
    That is, if the ``death value'' $\idx_q$ occurs {strictly} after $\idx_t$, then we obtain a set of basis vectors $\beta^{pq}_a$ by pulling back some basis vectors in $\beta_{\idx_t}$. 
    We require each preimage vector to lie in the submodule $\IK_a[\idx_p, \idx_q]$.
    Note that this requirement can always be satisfied because each $v \in \beta^{pq}_{\idx_t}$ lies in the image of $f(\idx_p \le \idx_t)$, hence also in the image of $f(a \leq \idx_t)$.  
    \item If none of the preceding rules applies (i.e., $\idx_s < \idx_p \le a <  \idx_q \leq \idx_t$), we choose any complement $A_a^{pq}$ of $\IK_a[i_{p-1},i_{q}]+\IK[i_{p},i_{q-1}]$ in $\IK_a[i_p,i_q]$, and define $\beta^{pq}_a$ to be any basis of $A_a^{pq}$.  
    (Such a complement exists by \cite[Theorem 17]{Luo2023}.)
    In this case, there is nothing in $J$ to push forward or pull back to index $a$, so we choose an arbitrary complement.
\end{enumerate}

\begin{lemma}
    The set $\beta^{pq}_a$ is linearly independent, for all $p$ and $q$.
\end{lemma}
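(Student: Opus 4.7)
The plan is to handle the four cases in the construction of $\beta^{pq}_a$ separately. Cases 1 and 4 are essentially immediate: in case 1 the set is empty, and in case 4 it is chosen as a basis of a submodule. The real content lies in cases 2 and 3, which arise from pushing basis vectors forward from $\idx_s$ or pulling them back from $\idx_t$.

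For case 3, in which $\beta^{pq}_a \subseteq f(a \leq \idx_t)^{-1}(\beta^{pq}_{\idx_t})$, the argument will be short: if $\sum_w c_w w = 0$ in $f_a$, applying $f(a \leq \idx_t)$ yields $\sum_w c_w v_w = 0$ in $f_{\idx_t}$, where $v_w = f(a \leq \idx_t)(w) \in \beta^{pq}_{\idx_t}$ is the element for which $w$ was chosen as a representative. Since distinct representatives were chosen for distinct elements of $\beta^{pq}_{\idx_t}$, and $\beta^{pq}_{\idx_t}$ is a subset of the basis $\beta_{\idx_t}$, linear independence of $\beta_{\idx_t}$ forces every $c_w$ to vanish.

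The main obstacle will be case 2, where $\beta^{pq}_a = f(\idx_s \leq a)(\beta^{pq}_{\idx_s})$. Here I need to show that $f(\idx_s \leq a)$ is injective on $\mathrm{span}(\beta^{pq}_{\idx_s})$, equivalently, that this span meets $\Ker[\idx_s, a]$ trivially. The key observation is that $\idx_k = a$ by the construction of $S$, hence $\Ker[\idx_s, a] = \Ker[\idx_s, \idx_k]$. By the observations of Section \ref{sec:basis_partition}, the subset $\{\, v \in \beta_{\idx_s} : \mathcal{K}(v) \leq k \,\}$ is a basis for this kernel. Meanwhile, the non-emptiness hypothesis in case 2 forces $\idx_q > a = \idx_k$, i.e., $q > k$, so every element of $\beta^{pq}_{\idx_s}$ has $\mathcal{K}$-value exactly $q > k$. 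Thus $\beta^{pq}_{\idx_s}$ is disjoint from the kernel basis; since both are subsets of the basis $\beta_{\idx_s}$ of $f_{\idx_s}$, their spans intersect trivially, giving the desired injectivity and hence linear independence of the image $\beta^{pq}_a$.
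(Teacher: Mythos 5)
Your proof is correct, and for Cases (1), (3), and (4) it coincides with the paper's argument. The interesting divergence is in Case (2). The paper dispatches this case in one line by invoking \cite[Lemma 16]{Luo2023}, which asserts that $f(\idx_s \le a)$ restricts to an isomorphism from $A^{pq}_{\idx_s}$ onto its image; linear independence of $\beta^{pq}_a = f(\idx_s \le a)(\beta^{pq}_{\idx_s})$ then follows immediately. You instead give a self-contained argument that stays entirely within the combinatorics of Section~\ref{sec:basis_partition}: since $\idx_k = a \in S$, the set $\{v \in \beta_{\idx_s} : \mathcal{K}(v) \le k\}$ is a basis for $\Ker[\idx_s, a]$, and since (outside of Case (1)) any nonempty $\beta^{pq}_{\idx_s}$ has every element with $\mathcal{K}$-value $q > k$, the subset $\beta^{pq}_{\idx_s}$ is disjoint from that kernel basis. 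Because both are subsets of a single basis $\beta_{\idx_s}$, their spans meet trivially, so $f(\idx_s \le a)$ is injective on $\mathrm{span}(\beta^{pq}_{\idx_s})$. Both arguments are sound. The paper's route is shorter but imports a structural result from the finite-index theory of \cite{Luo2023}; your route trades that dependency for a direct verification using only the consistent-basis hypothesis and the partition properties already established in this paper, which makes the logic more transparent here at the cost of a few extra lines. (Note that the paper does still need the full strength of \cite[Lemma 16]{Luo2023} in the proof of Lemma~\ref{lemma:valid_A^pq}, so your argument doesn't eliminate that dependency globally, only locally.)
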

\begin{proof}
    In case (1), $\beta^{pq}_a$ is empty, so there is nothing to prove. 
    
    In case (2), we have, by \cite[Lemma 16]{Luo2023}, that $f(i_s\leq a)|_{{A_{\idx_s}^{pq}}}$ restricts to an isomorphism onto its image. 
    Because $\beta^{pq}_{\idx_s}$ is linearly independent (it is a basis of $A^{pq}_{\idx_s}$), its image $f(i_s\leq a)(\beta^{pq}_{\idx_s})$ is as well.
    
    In case (3), the elements of $\beta^{pq}_a$ map bijectively onto a linearly independent set $\beta^{pq}_{\idx_t}$, under a linear map; therefore, $\beta^{pq}_a$ is linearly independent. 
    
    In case (4), the set $\beta^{pq}_a$ is linearly independent by hypothesis. 
\end{proof}

\begin{lemma}\label{lemma:valid_A^pq}
    Let $A^{pq}_a: = \mathrm{span}(\beta^{pq}_a)$.  Then, for all $1 \le p, q \le n$ we have
    \begin{align*}
        \IK_a[i_p,i_q]
        =
        A^{pq}_a 
        \oplus 
        \Biggl(
            \IK_a[i_{p-1},i_{q}]+\IK_a[i_{p},i_{q-1}]
        \Biggr )\,.
    \end{align*}
\end{lemma}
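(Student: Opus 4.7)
The plan is to verify the complement equation by case analysis, matching the four cases used to construct $\beta^{pq}_a$. Case (4) is immediate, since $A^{pq}_a$ was chosen via \cite[Theorem 17]{Luo2023} to be precisely such a complement. In case (1) we have $A^{pq}_a = 0$, so the task reduces to showing $\IK_a[i_p,i_q] = \IK_a[i_{p-1},i_q] + \IK_a[i_p,i_{q-1}]$; when $i_q \le a$ both sides vanish because $\Ker[a,i_q]=0$, and when $i_p > a$ we have $\im[i_p,a] = f_a$, and since $a = i_k \in S$ forces $i_{p-1} \ge a$, we also get $\im[i_{p-1},a] = f_a$, collapsing the right-hand side to $\Ker[a, i_q] = \IK_a[i_p, i_q]$.

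Cases (2) and (3) are handled by transporting the complement equation from a neighboring index ($i_s$ or $i_t$, respectively) along a structure map. For case (2), set $\psi := f(i_s \le a)$. Combining $\psi^{-1}(\Ker[a, y]) = \Ker[i_s, y]$ for $y \ge a$ with the elementary identity $\psi(M) \cap N = \psi(M \cap \psi^{-1}(N))$ yields $\IK_a[i_{p'},i_{q'}] = \psi(\IK_{i_s}[i_{p'},i_{q'}])$ for the triples $(p,q)$, $(p{-}1,q)$, $(p,q{-}1)$. Applying $\psi$ to the decomposition at $i_s$ provided by \eqref{eq:A_j^pq_complement} then yields the sum identity at $a$. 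For case (3), set $\phi := f(a \le i_t)$ and verify $\phi(\IK_a[i_{p'}, i_{q'}]) = \IK_{i_t}[i_{p'}, i_{q'}]$ whenever $i_{q'} \ge i_t$ by a direct image/preimage manipulation through the factorization $f(i_{p'} \le i_{q'}) = f(i_t \le i_{q'}) \circ \phi \circ f(i_{p'} \le a)$. Applying $\phi$ to the complement equation at $i_t$ shows that $W := A^{pq}_a + \IK_a[i_{p-1}, i_q] + \IK_a[i_p, i_{q-1}]$ and $\IK_a[i_p, i_q]$ share the same $\phi$-image; the residual kernel $\ker(\phi|_{\IK_a[i_p,i_q]}) = \IK_a[i_p, i_t]$ already lies in $\IK_a[i_p, i_{q-1}] \subseteq W$ because $i_t \le i_{q-1}$.

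The main obstacle will be directness of the sums in cases (2) and (3), since $\psi$ and $\phi$ both have non-trivial kernels. For case (2), we use the identity $\psi(A) \cap \psi(B) = \psi(A \cap (B + \ker\psi))$ to reduce directness to checking $A^{pq}_{i_s} \cap (X + \Ker[i_s, a]) = 0$, where $X := \IK_{i_s}[i_{p-1}, i_q] + \IK_{i_s}[i_p, i_{q-1}]$; this follows from the basis partition at $i_s$, which writes $X + \Ker[i_s, a]$ as a direct sum of the $A^{p'q'}_{i_s}$ over pairs $(p', q') \ne (p, q)$ (using that $i_q > a$ in case (2)). For case (3), directness follows from the bijective correspondence $\phi|_{A^{pq}_a} \colon A^{pq}_a \to A^{pq}_{i_t}$ built into the construction of $\beta^{pq}_a$ as preimages of $\beta^{pq}_{i_t}$, combined with the direct sum at $i_t$.
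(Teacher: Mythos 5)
Your case analysis matches the paper's construction of the $\beta^{pq}_a$, and after checking the details I believe each case goes through, but the route is genuinely different from the paper's. The paper dispatches cases (1), (2), and (3) by citing \cite[Lemma~15]{Luo2023}, \cite[Lemma~16]{Luo2023}, and its own Appendix Lemma \ref{lemma:short_exact_sequence} (the split short exact sequence), respectively. You instead argue directly: in case (1) you exploit $a=i_k\in S$ to collapse both sides to $\Ker[a,i_q]$ or to $0$; in cases (2) and (3) you transport the decomposition \eqref{eq:A_j^pq_complement} from $i_s$ (resp.\ $i_t$) to $a$ by applying $\psi=f(i_s\le a)$ (resp.\ $\phi=f(a\le i_t)$), using the elementary identities $\psi(M\cap\psi^{-1}(N))=\psi(M)\cap N$ and $\psi(A)\cap\psi(B)=\psi(A\cap(B+\ker\psi))$ together with the basis-partition structure from Section~\ref{sec:basis_partition}. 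This makes the argument self-contained --- it bypasses the appendix SES lemma and the two external lemmas from \cite{Luo2023} --- at the cost of being longer and requiring you to re-derive the surjectivity and kernel-tracking facts by hand. The paper's version is shorter because it outsources exactly those facts.

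Two small points of care. First, in case (2) the phrase ``as a direct sum of the $A^{p'q'}_{i_s}$ over pairs $(p',q')\ne(p,q)$'' slightly overstates things: the span is over the pairs satisfying ($p'\le p-1$ and $q'\le q$) or ($p'\le p$ and $q'\le q-1$) or $q'\le k$, which is a \emph{proper} subset of $\{(p',q')\ne(p,q)\}$; what matters, and what you correctly use, is only that $(p,q)$ is excluded. Second, in case (3) the identity $\phi(\IK_a[i_{p'},i_{q'}])=\IK_{i_t}[i_{p'},i_{q'}]$ needs both $i_{q'}\ge i_t$ (which you state) \emph{and} $i_{p'}\le a$ for the image-side equality $\phi(\im[i_{p'},a])=\im[i_{p'},i_t]$; the latter holds for all three relevant triples in case (3) but should be said. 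Neither affects correctness.
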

\begin{proof}
    We check the four cases defined in the construction of $\beta^{pq}_a$.

\noindent\textit{Case (1)} Suppose that $i_p>a$ or $i_q\leq a$. 
By the proof of \cite[Lemma 15]{Luo2023}, we have that $\IK_a[i_{p-1},i_{q}]+\IK_a[i_{p},i_{q-1}]=\IK_a[i_p,i_q]$. 
This implies that the complement is $0$, with basis $\beta^{pq}_a := \emptyset$, as desired.

\noindent\textit{Case (2)} Suppose that $p\leq s$. 
By \cite[Lemma 16]{Luo2023}, $f(i_s\leq a)(A_{i_s}^{pq})$ is a complement of $\IK_a[i_p,i_{q-1}]+\IK_a[i_{p-1},i_q]$ in $\IK_a[i_p,i_q]$.

\noindent\textit{Case (3)} Suppose that $i_s< i_p \leq a < i_t < i_q$. 
We first recall from \eqref{eq:A_j^pq_complement} that $\IK_{\idx_t}[\idx_{p},\idx_q] = A^{pq}_{\idx_t} \oplus \Bigl( \IK_{\idx_t}[\idx_{p-1},\idx_q]+\IK_{\idx_t}[\idx_{p},\idx_{q-1}] \Bigr)$. 
Note that $A_a^{pq}$ can be realized as $\ell(A^{pq}_{{i_t}})$, where $\ell$ is some choice of split map in the short exact sequence in Lemma \ref{lemma:short_exact_sequence}. 
Therefore, by Lemma \ref{lemma:short_exact_sequence}, we have 
\begin{align*}
    \IK_a[\idx_p,\idx_q] &= \ell(A_{{i_t}}^{pq}) \oplus \Bigl( \IK_{a}[\idx_{p-1},\idx_q]+\IK_{a}[\idx_{p},\idx_{q-1}] \Bigr)\\
    &= A_a^{pq} \oplus \Bigl( \IK_{a}[\idx_{p-1},\idx_q]+\IK_{a}[\idx_{p},\idx_{q-1}] \Bigr)
\end{align*}

\noindent\textit{Case (4)} This case holds by definition.

As all cases have been addressed, we conclude the proof.
\end{proof}

\begin{figure}[h]
    \centering
    \includegraphics[width=0.9\textwidth]{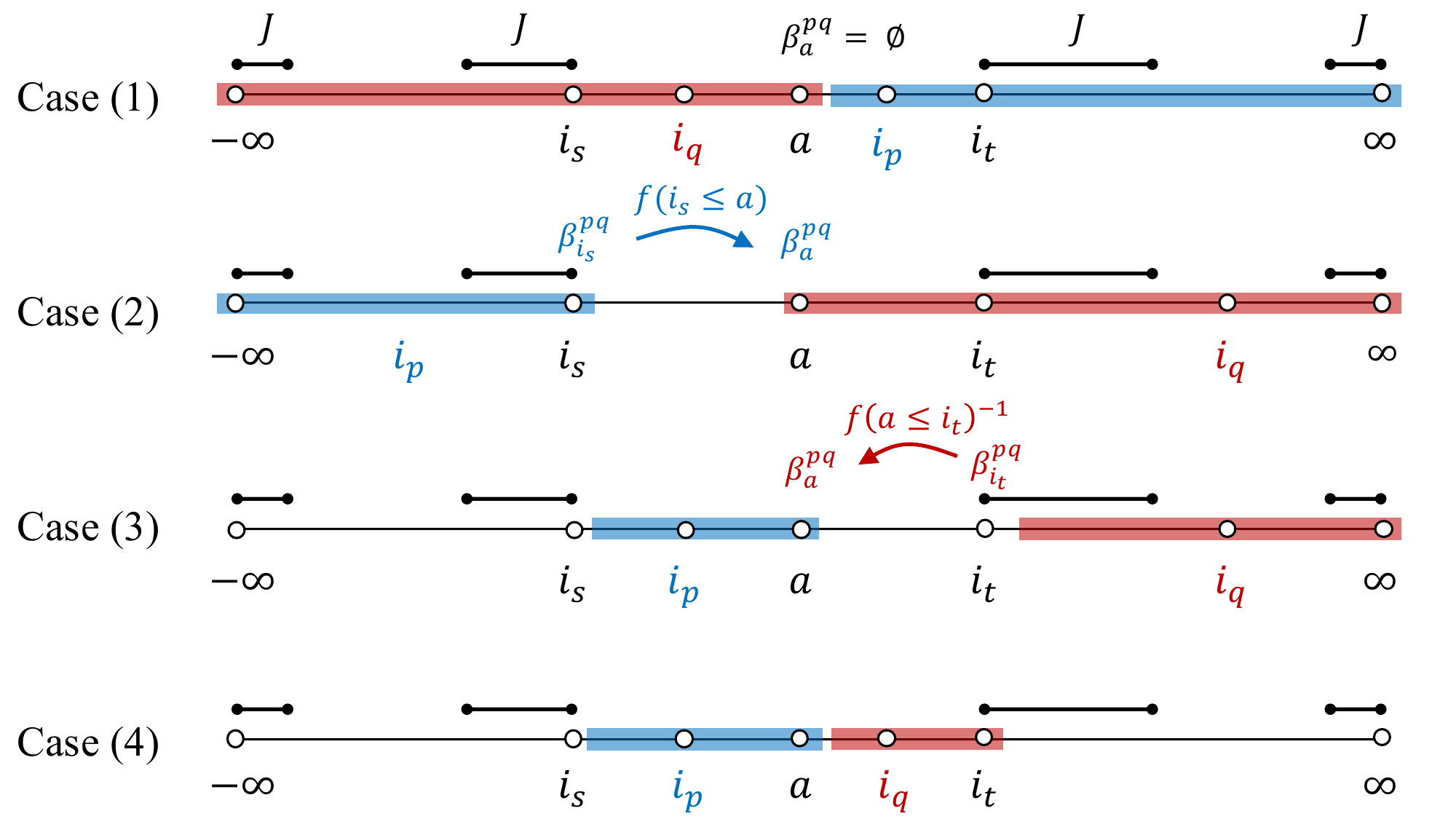}
    \caption{
    \jerry{The four cases for defining $\beta_a^{pq}$, for different values of $\idx_p$ (birth) and $\idx_q$ (death). Red and blue segments show possible value ranges for $i_q$ and $i_p$, in each case. In Case (1) death occurs before birth, so $\beta_a^{pq}$ must be empty. In Case (2) birth occurs at or before $i_s$, so we may push some vectors from $\beta^{pq}_{i_s}$ forward to form $\beta^{pq}_a$. In Case (3) death occurs strictly after $i_t$, so we may pull some vectors back from $\beta^{pq}_{i_t}$. In Case (4) birth (respectively, death) occurs too late (respectively, too early), so no basis may be obtained from pushing forward or pulling back, and we must construct a new basis entirely.}
    }
    %
    \label{fig:cases_timeline_picture}  
\end{figure}

\begin{lemma}\label{lemma:actual_basis}
    The set $\beta_a := \bigsqcup_{1\leq p,q\leq r}\beta^{pq}_a$ forms a basis for $f_a$. 
\end{lemma}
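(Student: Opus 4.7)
The plan is to reduce to the finitely-indexed setting by invoking the decomposition identities from \cite[Theorem 18]{Luo2023} (summarized as equation (2.1) in the excerpt) applied to the restriction $f|_S$, where $S = \{\idx_1,\ldots,\idx_r\}$ is the finite index set already fixed during the construction of the $\beta_a^{pq}$.

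First I would verify that $f|_S$ meets the hypotheses of the finite theory: it inherits pointwise freeness and finite generation from $f$, every structure map $f(\idx_m\leq \idx_n)$ is itself a structure map of $f$ and so has free cokernel, and $a=\idx_k \in S$. Crucially, the submodules $\IK_a[\idx_p,\idx_q]$ depend only on $f$ evaluated at indices in $S$, so they coincide whether one computes them in $f|_S$ or in $f$ itself; the same is true of $\Ker[a,\idx_q]$ and $\im[\idx_p,a]$.

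The main substantive input has already been supplied: Lemma \ref{lemma:valid_A^pq} establishes that the submodules $A_a^{pq} := \mathrm{span}(\beta_a^{pq})$ realize exactly the complement relation demanded by \cite[Theorem 17]{Luo2023} with respect to $f|_S$. I would then invoke \cite[Theorem 18]{Luo2023}, i.e. the first identity of (2.1), to conclude
\begin{equation*}
    f_a = \bigoplus_{1 \leq p,q \leq r} A_a^{pq}.
\end{equation*}
From this point, the argument is essentially packaging: each $\beta_a^{pq}$ spans $A_a^{pq}$ by definition, and linear independence was proved in the preceding lemma, so $\beta_a^{pq}$ is a basis for $A_a^{pq}$. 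Moreover, the summands $A_a^{pq}$ meet pairwise only at $0$, so any basis element of one summand cannot lie in another; this gives $\beta_a^{pq} \cap \beta_a^{p'q'} = \emptyset$ for $(p,q) \neq (p',q')$. Assembling bases over a direct sum then shows $\beta_a = \bigsqcup_{p,q}\beta_a^{pq}$ is a basis for $f_a$.

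I do not anticipate a genuine obstacle here; the conceptual work has been done in Section \ref{sec:basis_partition} and Lemma \ref{lemma:valid_A^pq}, and the only point requiring care is the observation that $\IK$, $\Ker$, and $\im$ submodules computed over $S$ agree with those computed in $f$, so that the finite-index decomposition theorem legitimately applies to $f|_S$ at the index $a$.
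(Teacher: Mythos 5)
Your argument is correct and follows exactly the route taken in the paper: the key input is \cite[Theorem 18]{Luo2023} applied to the finite restriction $f|_S$, with Lemma \ref{lemma:valid_A^pq} supplying the needed complement relation. The paper's own proof is a one-line citation of that theorem; you have simply made explicit the supporting checks (hypotheses of $f|_S$, agreement of $\IK$/$\Ker$/$\im$ between $f$ and $f|_S$, and assembly of the direct-sum basis) that the paper leaves implicit.
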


\begin{proof}
    To show that $\beta_a = \bigsqcup_{p,q}\beta^{pq}_a$ is a basis for $f_a$, it is enough to show that $f_a = \bigoplus_{p,q}A^{pq}_a$. 
    This is given by \cite[Theorem 18]{Luo2023}.
\end{proof}

\begin{lemma}\label{lemma:matching_matrix}
    The matrix representation $[f(\idx_s\leq a)]_{\beta_{\idx_s}}^{\beta_a}$ with respect to bases $\beta_{\idx_s}$ and $\beta_a$ is a matching matrix. 
    Similarly, the matrix representation $[f(a\leq \idx_t)]_{\beta_a}^{\beta_{\idx_t}}$ with respect to bases $\beta_a$ and $\beta_{\idx_t}$ is a matching matrix. 
\end{lemma}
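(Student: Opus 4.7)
The plan is to verify that each basis element of $\beta_{\idx_s}$ (resp.\ $\beta_a$) maps under $f(\idx_s\leq a)$ (resp.\ $f(a\leq\idx_t)$) either to zero or to a single element of $\beta_a$ (resp.\ $\beta_{\idx_t}$) with coefficient $1$, and that distinct nonzero images do not collide; together these conditions characterize a matching matrix. The argument will proceed by a case analysis keyed to the labels $(p,q)$ of each basis element and to which of the four construction rules produced $\beta_a^{pq}$.

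For $f(\idx_s\leq a)$, I would first note that any $u\in\beta_{\idx_s}^{pq}$ satisfies $p\leq s$ and $q>s$, because $\im[\idx_p,\idx_s]=f_{\idx_s}$ when $\idx_p>\idx_s$ and $\Ker[\idx_s,\idx_q]=0$ when $\idx_q\leq\idx_s$. If $q\leq k$, then $u\in\Ker[\idx_s,\idx_q]\subseteq\Ker[\idx_s,a]$ forces $f(\idx_s\leq a)(u)=0$. If $q>k$, then Case (2) of the construction is precisely the statement that $f(\idx_s\leq a)(u)\in\beta_a^{pq}$, and \cite[Lemma 16]{Luo2023} together with the already-established linear independence of $\beta_a^{pq}$ makes $\beta_{\idx_s}^{pq}\to\beta_a^{pq}$ a bijection; disjointness of the $\beta_a^{pq}$'s across distinct $(p,q)$ rules out collisions.

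For $f(a\leq\idx_t)$, I would partition $v\in\beta_a^{pq}$ by which case produced it. In Case (2), $v=f(\idx_s\leq a)(u)$ for unique $u\in\beta_{\idx_s}^{pq}$, so $f(a\leq\idx_t)(v)=f(\idx_s\leq\idx_t)(u)$, which is zero when $q\leq t$ (by kernel containment) and is a single element of $\beta_{\idx_t}^{pq}$ when $q>t$ (by consistency of $(\beta_j)_{j\in J}$ on $J$, plus the label-preservation claim discussed below). In Case (3), $f(a\leq\idx_t)(v)$ is the preimage target in $\beta_{\idx_t}^{pq}$ by construction. In Case (4), $v\in\IK_a[\idx_p,\idx_q]\subseteq\Ker[a,\idx_t]$ (since $\idx_q\leq\idx_t$), so $f(a\leq\idx_t)(v)=0$. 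Row-wise uniqueness for each $w\in\beta_{\idx_t}^{pq}$ then follows by splitting on $p$: the range $p\leq s$ is filled exactly once via the bijection of Case (2), the range $s<p\leq k$ exactly once via Case (3), and $p>k$ forces $w\notin\im[a,\idx_t]$, so no $v$ targets $w$.

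The main obstacle is the label-preservation claim underlying Case (2) above: for $u\in\beta_{\idx_s}^{pq}$ with $q>t$, the single basis element $w:=f(\idx_s\leq\idx_t)(u)\in\beta_{\idx_t}$ actually lies in $\beta_{\idx_t}^{pq}$. To pin down the birth label at $\idx_t$, I would suppose for contradiction that $w\in\im[\idx_{p-1},\idx_t]$, write $w=f(\idx_s\leq\idx_t)(v')$ with $v'\in\im[\idx_{p-1},\idx_s]$, and infer $u-v'\in\Ker[\idx_s,\idx_t]$; expanding both $v'$ and $u-v'$ in $\beta_{\idx_s}$ uses only basis elements of birth $<p$ or death $\leq t$ respectively, so neither expansion contains $u$ in its support, contradicting $u=v'+(u-v')$. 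A symmetric argument using $\Ker[\idx_t,\idx_{q-1}]$ pins down the death label.
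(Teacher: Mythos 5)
Your proposal is correct and follows essentially the same case structure as the paper's proof: partition $\beta_{\idx_s}$ and $\beta_a$ by the labels $(p,q)$, observe that each block $\beta^{pq}_{\idx_s}$ (resp.\ $\beta^{pq}_a$) is empty, maps to zero, or maps bijectively onto $\beta^{pq}_a$ (resp.\ $\beta^{pq}_{\idx_t}$), and conclude that the full matrix is a matching matrix since the blocks are disjoint. Your split on whether $q\le k$ or $q>k$ (for $f(\idx_s\le a)$) and the three-way split by $p$ (for $f(a\le\idx_t)$) line up with the paper's subcases.

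The one place you invest more effort than the paper is the ``label-preservation'' claim: that $f(\idx_s\le\idx_t)$ carries $\beta_{\idx_s}^{pq}$ \emph{into} $\beta_{\idx_t}^{pq}$ rather than into some other block $\beta_{\idx_t}^{p'q'}$ with $p'\le p$, $q'\le q$. The paper dispatches this in a single sentence by invoking the ``consistent basis with respect to $f$'' hypothesis. You are right that this deserves justification, since \cite[Lemma 16]{Luo2023} only gives that $f(\idx_s\le\idx_t)(A^{pq}_{\idx_s})$ is \emph{some} complement, not that it coincides with $A^{pq}_{\idx_t}$. Your expansion argument (write $w=f(\idx_s\le\idx_t)(v')$ with $v'\in\mathrm{Im}[\idx_{p-1},\idx_s]$, conclude $u-v'\in\mathrm{Ker}[\idx_s,\idx_t]$, and note that neither summand's expansion can contain $u$) is valid. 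A slightly slicker version: since $\beta_{\idx_s}\cap\mathrm{Im}[\idx_{p-1},\idx_s]$ is a basis for $\mathrm{Im}[\idx_{p-1},\idx_s]$, its nonzero images under $f(\idx_s\le\idx_t)$ form a spanning subset of $\beta_{\idx_t}$ contained in $\mathrm{Im}[\idx_{p-1},\idx_t]$, hence are precisely $\beta_{\idx_t}\cap\mathrm{Im}[\idx_{p-1},\idx_t]$; and since $u\notin\mathrm{Im}[\idx_{p-1},\idx_s]$, its image $w$ is not among them, so $w\notin\mathrm{Im}[\idx_{p-1},\idx_t]$. The death label is even easier, as you note, since $w\in\mathrm{Ker}[\idx_t,\idx_{q-1}]$ would force $u\in\mathrm{Ker}[\idx_s,\idx_{q-1}]$ directly. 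Either way the proposal is complete and matches the paper's route.
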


\begin{proof}
    We first show that $[f(\idx_s\leq a)]_{\beta_{\idx_s}}^{\beta_a}$ is a matching matrix. 

    For each $p$ and $q$, one of the following mutually exclusive cases must hold: 
    \begin{enumerate}
        \item $\idx_p > \idx_s$ or $\idx_q\leq \idx_s$. In this case, the basis $\beta^{pq}_{\idx_s}$ is empty, by \cite[Lemma 15]{Luo2023}.  
        \item $i_p \le i_s < i_q$. We divide this into two subcases: $a < i_q$ and  $a \ge i_q (> i_s)$
        \begin{enumerate}
            \item  $a < \idx_q$.
            In this case, the structure map $f(\idx_s\leq a)$ maps $A^{pq}_{\idx_s}$ isomorphically onto $A^{pq}_a$ and, in particular, maps $\beta^{pq}_{\idx_s}$ bijectively onto $\beta_a^{pa}$. 
            \item $\idx_s < \idx_q\leq a$. In this case, $A^{pq}_{\idx_s}\subseteq \Ker[\idx_s,a]$, so the corresponding basis $\beta_{\idx_t}^{pq}$ maps to $0$. 
        \end{enumerate}
    \end{enumerate}
    
    Thus, in each case, the basis $\beta^{pq}_{\idx_s}$ is empty, maps to zero, or maps bijectively onto $\beta^{pq}_{a}$. 
    Because $\beta_a = \bigsqcup_{p,q} \beta^{pq}_{a}$ and $\beta_{\idx_s} = \bigsqcup_{p,q}  \beta^{pq}_{\idx_s}$, it follows that the map $[f(\idx_s\leq a)]_{\beta_{\idx_s}}^{\beta_a}$ is a matching matrix.

    
    
    

    Next, we show that $[f(a\leq \idx_t)]_{\beta_a}^{\beta_{\idx_t}}$ is a matching matrix. For each $p$ and $q$, one of the following mutually exclusive cases must hold.
    \begin{enumerate}
        \item $\idx_p > a$ or $\idx_q\leq a$. In this case, $A^{pq}_a=0$ by definition, with basis $\beta^{pq}_{a}=\emptyset$. 
        \item $\idx_q >\idx_t  (> a)$. 
        We divide this case into the subcases where $\idx_s \ge \idx_p$ and where  $\idx_s < \idx_p (\le a)$.
        \begin{enumerate}
            \item $\idx_s\geq \idx_p$. In this case, recall that $A_a^{pq} = f(\idx_s\leq a)(A^{pq}_{\idx_s})$, with basis $\beta^{pq}_a=f(a\leq b)(\beta^{pq}_{\idx_s})$, by construction. 
            Because $(\beta_j)_{j\in J}$ is a consistent basis with respect to $f$, we have $f(\idx_s\leq \idx_t)$ mapping $\beta_{\idx_s}^{pq}$ bijectively onto $\beta_{\idx_t}^{pq}$. 
            Therefore, functoriality of $f$ implies that $f(a\leq \idx_t)$ maps $\beta_{a}^{pq}$ bijectively onto $\beta_{\idx_t}^{pq}$.
            \item $\idx_s<\idx_p(\leq a)$. 
            In this case, recall that we constructed the basis $\beta_a^{pq}\subseteq A_a^{pq}$ by taking the basis $\beta_{\idx_t}^{pq}\subseteq A_{\idx_t}^{pq}$ and choosing a preimage for each element. 
            This implies that $f(a\leq \idx_t)$ maps $\beta_{a}^{pq}$ bijectively into $\beta_{\idx_t}^{pq}$. 
        \end{enumerate}
    \item  $\idx_t\geq \idx_q > a$. 
    Note that $A^{pq}_a\subseteq \Ker[a,\idx_t]$. 
    Therefore, $f(a\leq \idx_t)$ maps the corresponding basis $\beta_{a}^{pq}$ to $0$. 
    \end{enumerate}
    
    Thus, in each case, the basis $\beta^{pq}_{a}$ is empty, maps to zero, or maps bijectively onto $\beta^{pq}_{\idx_t}$. 
    Because $\beta_a = \bigsqcup_{p,q} \beta^{pq}_{a}$ and $\beta_{\idx_t} = \bigsqcup_{p,q}  \beta^{pq}_{\idx_t}$, it follows that the map $[f(\idx_s\leq a)]_{\beta_{\idx_s}}^{\beta_a}$ is a matching matrix.
    %
    %
    %
    %
    %
\end{proof}

We can extend Lemma \ref{lemma:matching_matrix} by the following corollary.

\begin{theorem}
    [Matching matrices for $(S\cap J)\cup\{a\}$]
    \label{thm:matching_matrix}
    For any $\idx_p\in S\cap J$ such that $\idx_p<a$, the matrix representation $[f(\idx_p\leq a)(A)]_{\beta_{\idx_p}}^{\beta_a}$ with respect to bases $\beta_{\idx_p}$ and $\beta_a$ is a matching matrix. 
    Similarly, for any $\idx_q\in S\cap J$ such that $\idx_q>a$, the matrix representation $[f(a\leq \idx_q)(A)]_{\beta_a}^{\beta_{\idx_q}}$ with respect to bases $\beta_a$ and $\beta_{\idx_q}$ is a matching matrix.
\end{theorem}

\begin{proof}
    For any $\idx_p\in S\cap J$ such that $\idx_p<a$, observe that $f(\idx_p\leq a) = f(\idx_s\leq a)\circ f(\idx_p\leq \idx_s)$. 
    So, $[f(\idx_p\leq a)]_{\beta_{\idx_p}}^{\beta_a} = [f(\idx_s\leq a)]_{\beta_{\idx_s}}^{\beta_a} [f(\idx_p\leq \idx_s)]_{\beta_{\idx_p}}^{\beta_{\idx_s}}$. 
    By Lemma \ref{lemma:matching_matrix}, the matrix representation $[f(\idx_s\leq a)]_{\beta_{\idx_s}}^{\beta_a}$ is a matching matrix, and $[f(\idx_p\leq \idx_s)]_{\beta_{\idx_p}}^{\beta_{\idx_s}}$ is a matching matrix by assumption that $\{\beta_j\}_{j\in J}$ is a consistent basis with respect to $f$.
    Therefore, it follows that $[f(\idx_p\leq a)]_{\beta_{\idx_p}}^{\beta_a}$ is a matching matrix. 

    Similarly, for any $\idx_q\in S\cap J$ such that $\idx_q>a$, observe that $f(a\leq \idx_q) = f(\idx_t\leq \idx_q)\circ f(a\leq \idx_t)$. 
    By a similar argument, $[f(a\leq \idx_q)]_{\beta_{a}}^{\beta_{\idx_q}}$ is a matching matrix.
\end{proof}

We now show $(\beta_j)_{j\in J\cup\{a\}}$ forms a consistent basis for $f|_{J\cup\{a\}}$ with respect to $f$. 

\begin{theorem}    
    \label{thm:consistent_basis}
    The family of bases $(\beta_j)_{j\in J\cup\{a\}}$ forms a consistent basis for $f|_{J\cup\{a\}}$ with respect to $f$. 
\end{theorem}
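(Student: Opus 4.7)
The plan is to verify, for the family $(\beta_j)_{j \in J \cup \{a\}}$, the three conditions defining a consistent basis with respect to $f$: (a) each $\beta_j$ is a basis of $f_j$; (b) every structure-map matrix in these bases is a matching matrix; (c) each $\beta_j$ contains bases for all submodules $\im[i, j]$ and $\Ker[j, k]$ with $i, k \in I$.  For $j \in J$ all three conditions hold by hypothesis, so the work is concentrated at $j = a$ and in structure maps between $a$ and elements of $J$.

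Condition (a) at $j = a$ is exactly Lemma \ref{lemma:actual_basis}. For condition (c) at $j = a$, I would assemble Lemma \ref{lemma:valid_A^pq} across pairs $(p,q)$ as in \cite[Theorem 18]{Luo2023}, obtaining $\bigoplus_{q \le n} A_a^{pq} = \Ker[a, \idx_n]$ and $\bigoplus_{p \le n} A_a^{pq} = \im[\idx_n, a]$ for each $n$; the required bases are the corresponding disjoint unions of $\beta_a^{pq}$.  Since the choice of $S$ ensures that every $\im[x, a]$ and every $\Ker[a, y]$ has this form, (c) follows.

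For condition (b), I would split each new structure map $f(b \le a)$ (with $b \in J$, $b < a$) or $f(a \le c)$ (with $c \in J$, $c > a$) according to how $b$, $c$ compare to $\idx_s$, $\idx_t$.  When $b \le \idx_s$ (resp.\ $c \ge \idx_t$), the map factors as $f(b \le a) = f(\idx_s \le a) \circ f(b \le \idx_s)$ (resp.\ $f(a \le c) = f(\idx_t \le c) \circ f(a \le \idx_t)$).  The outer factor is matching by Lemma \ref{lemma:matching_matrix} and the inner by consistency of $(\beta_j)_{j \in J}$; a product of matching matrices is matching, so this subcase follows.

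The main obstacle is the intermediate case $\idx_s < b < a$ (and dually $a < c < \idx_t$), where $b$ need not lie in $S$ and the composition trick is unavailable.  For $\idx_s < b < a$, I would exploit the requirement that $S$ contain as many elements of $J$ as possible: the image $\im[b, a]$ must equal $\im[\idx_p, a]$ for some $\idx_p \in S \cap J$, necessarily with $\idx_p \le \idx_s$, which forces $\im[b, a] = \im[\idx_s, a]$.  A short diagram chase then yields $f_b = \im[\idx_s, b] + \Ker[b, a]$, and Corollary \ref{cor:basis_subset_basis} implies every $v \in \beta_b$ lies in $\im[\idx_s, b]$ or in $\Ker[b, a]$ (equivalently, the partition sets $\beta_b^{pq}$ with both $p > s$ and $q > k$ must be empty).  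Vectors in $\Ker[b, a]$ vanish under $f(b \le a)$; vectors in $\im[\idx_s, b]$ correspond, via matching of $f(\idx_s \le b)$, to unique $u \in \beta_{\idx_s}$, and $f(b \le a)(v) = f(\idx_s \le a)(u) \in \{0\} \cup \beta_a$ by Lemma \ref{lemma:matching_matrix}.  Row uniqueness inherits from the row uniqueness of $f(\idx_s \le a)$.  The case $a < c < \idx_t$ is dual: the choice of $S$ forces $\Ker[a, c] = \Ker[a, \idx_t]$, which makes $f(c \le \idx_t)$ injective on $\im[a, c]$, and a parallel argument identifies $f(a \le c)(v)$ with the unique element of $\beta_c$ lifting $f(a \le \idx_t)(v)$ under $f(c \le \idx_t)$.
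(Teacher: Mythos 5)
Your proof reuses the paper's overall architecture: reduce the verification to (a) $\beta_a$ is a basis, (b) matching-matrix representations, (c) image/kernel containment, and invoke Lemmas \ref{lemma:actual_basis}, \ref{lemma:valid_A^pq}, \ref{lemma:matching_matrix}. For condition (c) you assemble \cite[Theorem 18]{Luo2023} directly from Lemma \ref{lemma:valid_A^pq}, whereas the paper routes through Corollary \ref{cor:matching_matrix} and Lemma \ref{lemma:consistent basis_im_ker}; both are fine. The one place you genuinely part ways with the paper is the intermediate case $\idx_s < b < a$ (and dually $a < c < \idx_t$) in condition (b). There the paper chooses an auxiliary $\idx_u \in S \cap J$ with $\im[\idx_u, a] = \im[c, a]$ and splits on $c \le \idx_u$ vs.\ $\idx_u \le c$, factoring $f(c \le a)$ through $\idx_u$; you instead try to collapse the intermediate index to the distinguished index $\idx_s$.

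The collapse step is where a gap appears. You assert that the $S$-element $\idx_p \in S \cap J$ with $\im[\idx_p, a] = \im[b, a]$ must satisfy $\idx_p \le \idx_s$, and hence $\im[b, a] = \im[\idx_s, a]$. This is not forced. When $f(b \le a)$ is surjective, $\im[b, a] = f_a$, and because $a \in S$ already realizes the image $f_a$, the minimal-size requirement excludes any $\idx < a$ with $\im[\idx, a] = f_a$ from $S$; condition (3) is then met by taking $\idx_p > a$ (so $\idx_p \ge \idx_t$), where $\im[\idx_p, a] = f_a$ holds by the convention of Definition \ref{def:imker}. Concretely: take $f$ to be the interval module supported on $\{1, 2, 2.5, 3\}$ inside a six-point $I$, with $J = \{0,1,2,2.5,4\}$ and $a = 3$; then $S = \{0,3,4\}$ is forced, $\idx_s = 0$, and for $b = 2.5$ one has $\im[b,a] = R = f_a$ while $\im[\idx_s,a] = 0$. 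In that situation $f_b \ne \im[\idx_s, b] + \Ker[b, a]$, so the splitting of $\beta_b$ you rely on fails, and the rest of your argument (the diagram chase, the appeal to Corollary \ref{cor:basis_subset_basis}, the row-uniqueness inheritance) has no footing. The dual assertion $\Ker[a, c] = \Ker[a, \idx_t]$ has the mirror failure when $f(a \le c)$ is injective but $\Ker[a, \idx_t] \ne 0$. What you \emph{can} conclude from minimality is $\idx_p \le \idx_s$ only when $\im[b,a] \ne f_a$; the surjective/injective boundary case needs its own treatment, and your proposal as written does not provide one.

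For comparison, the paper sidesteps your collapse entirely by working with an arbitrary covering element $\idx_u$ rather than $\idx_s$, and then composing through $\idx_u$. That maneuver avoids the false equality $\im[b,a] = \im[\idx_s,a]$, though a careful reader should note that the two subcases there also presuppose $\idx_u \le a$ (respectively $\idx_\ell \ge a$), so the same boundary situation deserves an explicit word. In short: same skeleton, different treatment of the intermediate case, and a concrete gap in the specific claim ``necessarily $\idx_p \le \idx_s$.''
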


We show Theorem \ref{thm:consistent_basis} in two steps. 
We first show, in Theorem \ref{thm:matching_matrix_general}, that the matrix representation of every structure map with respect to the bases $(\beta_j)_{j\in J\cup\{a\}}$ is a matching matrix.
Next, we show, in Theorem \ref{theorem:image/kernel_containment}, that every $\beta_j$, where $j\in J\cup\{a\}$, contains a basis for all of the kernel and image submodules that are required for a basis to meet the criteria of a consistent basis with respect to $f$.

\begin{figure}[h]
    \centering
    \includegraphics[width=0.7\textwidth]{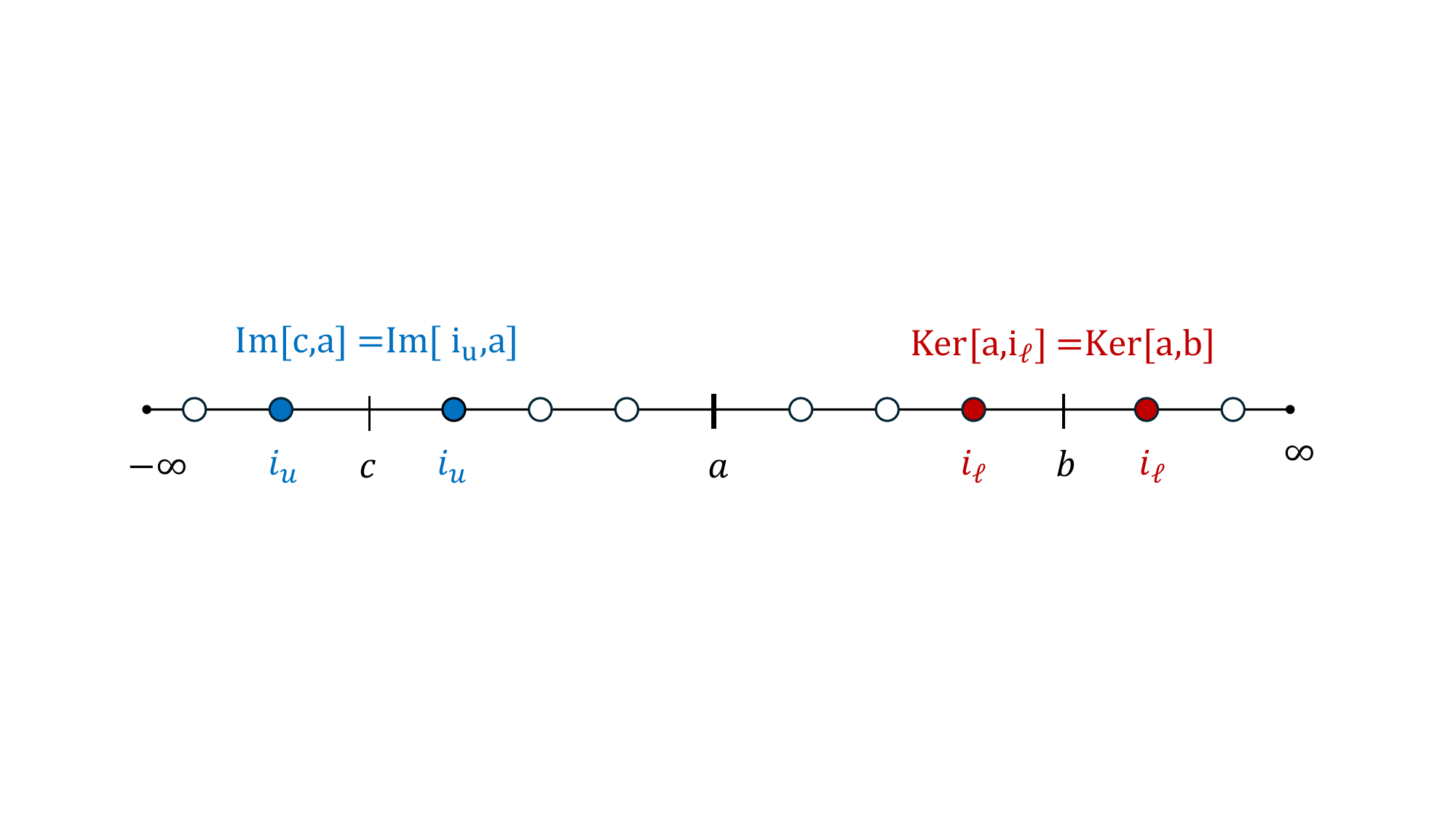}
    \caption{Indices in the proof of Theorem \ref{thm:matching_matrix_general}.
    For $c,b\in J$ such that $c < a < b$, we choose $\idx_u,\idx_\ell \in S$ such that $\mathrm{Im}[c,a] = \mathrm{Im}[\idx_u,a]$ and $\ker[a,b] = \ker[a, \idx_\ell]$.
    The index $\idx_u$ (respectively, $\idx_\ell$) may be greater or less than $c$ (respectively, $b$).
    }
    \label{fig:matching_basis_for_j_proof}  
\end{figure}

\begin{theorem}
    [Matching matrices for $J\cup\{a\}$]
    \label{thm:matching_matrix_general}
    The matrix representation of every structure map with respect to the bases $(\beta_j)_{j\in J\cup\{a\}}$ is a matching matrix.
    That is, for every $c, b\in J\cup\{a\}$ \jerry{such that $c \le b$}, the matrix representation $[f(c\leq b)]_{\beta_c}^{\beta_b}$ is a matching matrix.
\end{theorem}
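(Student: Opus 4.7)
The strategy is a case analysis on how $c$ and $b$ relate to $J$ and $a$. If $c,b\in J$, then $\beta_c$ and $\beta_b$ are unchanged by the extension step and the claim is immediate from the hypothesis that $(\beta_j)_{j\in J}$ is a consistent basis of $f|_J$ with respect to $f$. If $c=b=a$, then $f(a\le a)=\mathrm{id}_{f_a}$ and its matrix representation is the identity, which is matching.

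The remaining cases have exactly one of $c,b$ equal to $a$. By the symmetric roles of $\idx_s$ and $\idx_t$ in the construction of $\beta_a$, it suffices to treat $c=a$ and $b\in J$ with $a<b$ (the other direction is handled by an argument symmetric in the opposite order, pivoting on $\idx_s$). The plan is to factor $f(a\le b)$ through an element of $S\cap J$ and combine Lemma \ref{lemma:matching_matrix} and Corollary \ref{cor:matching_matrix} with the closure of matching matrices under composition. When $b\ge\idx_t$ this works cleanly: $f(a\le b)=f(\idx_t\le b)\circ f(a\le \idx_t)$, where the first factor is matching by the consistent-basis hypothesis on $J$ (both $\idx_t,b\in J$) and the second by Lemma \ref{lemma:matching_matrix}.

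The hard subcase is $a<b<\idx_t$ with $b\in J\setminus S$, where forward factoring through $\idx_t$ is unavailable. Here I use the relation $f(a\le\idx_t)=f(b\le\idx_t)\circ f(a\le b)$ together with the explicit construction of $\beta_a^{pq}$ in Section \ref{sec:basis_partition}. For $v\in\beta_a^{pq}$ coming from case 2 of the construction, $v=f(\idx_s\le a)(v')$ for some $v'\in\beta_{\idx_s}^{pq}$, so $f(a\le b)(v)=f(\idx_s\le b)(v')$ lands in $\beta_b\cup\{0\}$ by the consistency of $(\beta_j)_{j\in J}$ since $\idx_s,b\in J$. For the remaining $v$ (cases 3 and 4 of the construction), $f(b\le\idx_t)(f(a\le b)(v))$ lies in $\beta_{\idx_t}\cup\{0\}$, so $f(a\le b)(v)$ lies in a coset of $\Ker[b,\idx_t]$ whose representative in $\beta_b$ is determined by the matching structure of $[f(b\le\idx_t)]_{\beta_b}^{\beta_{\idx_t}}$. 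The main obstacle is pinning this image down as exactly that basis element of $\beta_b$ rather than merely an element of its kernel-coset; I expect this step to require invoking the hypothesis that $\beta_b$ contains bases for all submodules $\im[\idx_p,b]$ and $\Ker[b,\idx_m]$ (Definition \ref{def:bases}), and exploiting the block structure induced by the $(\mathcal{I},\mathcal{K})$-partition of $\beta_b$ to force the kernel-coset correction to vanish.
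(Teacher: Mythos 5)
Your overall strategy coincides with the paper's: reduce to the case where exactly one of $c,b$ equals $a$, then factor through an element of $S \cap J$ and compose matching matrices. The easy subcase $b \ge \idx_t$ is handled correctly. But the hard subcase $a < b < \idx_t$, which you correctly identify as the crux, is left with an acknowledged gap (``I expect this step to require\dots''), and the one fact that closes that gap is never stated.

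That fact is the following. By the construction of $S$, there is $\idx_\ell \in S \cap J$ with $\Ker[a,\idx_\ell] = \Ker[a,b]$, and in the subcase $a < b < \idx_t$ one has $\idx_\ell \ge \idx_t > b$; the chain $\Ker[a,b] \subseteq \Ker[a,\idx_t] \subseteq \Ker[a,\idx_\ell] = \Ker[a,b]$ then forces $\Ker[a,b] = \Ker[a,\idx_t]$. This is the lever. From it, functoriality gives $\im[a,b] \cap \Ker[b,\idx_\ell] = 0$, which is exactly what is needed to kill the ``kernel-coset correction'': the paper writes $f(a \le b)(v) = \sum_i c_i w_i$ in a $\beta_b$-basis $\{w_i\}$ of $\im[a,b]$, applies $f(b \le \idx_\ell)$, and uses $\im[a,b]\cap \Ker[b,\idx_\ell] = 0$ to see that each $f(b \le \idx_\ell)(w_i)$ lies in $\beta_{\idx_\ell}$; since $f(a \le \idx_\ell)(v)$ is a single basis element of $\beta_{\idx_\ell}$, all but one $c_i$ must vanish. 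Without noting $\Ker[a,b]=\Ker[a,\idx_\ell]$, pivoting on $\idx_t$ alone is not enough: \emph{a priori} there could exist $v \in \Ker[a,\idx_t]\setminus\Ker[a,b]$, for which composing forward to $\idx_t$ yields $0$ and gives no information about $f(a \le b)(v)$; the equality of kernels is precisely what rules this out. Finally, the claimed symmetry for maps \emph{into} $f_a$ should be checked rather than asserted --- the paper's argument for that direction is not a mirror image but pivots on an index $\idx_u$ with $\im[\idx_u,a]=\im[c,a]$ and closes via a rank--nullity count rather than a coefficient-matching argument.
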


\begin{proof}
This proof involves several elements of $I$, labeled $a,b,c,\idx_u,$ and $\idx_\ell$. See Figure \ref{fig:matching_basis_for_j_proof} for a visual aid of the relative positions of these indices.

It is enough to focus only on structure maps that start or end at $f_a$, because $(\beta_j)_{j\in J}$ (i.e., without $a$) is assumed to be a consistent basis with respect to $f$.

We first look at maps that end at $f_a$. 
Take any $c\in J$ such that $c<a$. 
Choose $\idx_u\in S\cap J$ such that $\im[\idx_u, a] = \im[c,a]$. 
If $c\leq \idx_u \leq a$, then $[f(c\leq a)]_{\beta_c}^{\beta_a} = [f(\idx_u\leq a)]_{\beta_{\idx_u}}^{\beta_a} [f(c\leq \idx_u)]_{\beta_c}^{\beta_{\idx_u}}$.
Note that $[f(c\leq \idx_u)]_{\beta_c}^{\beta_{\idx_u}}$ is a matching matrix because $\idx_u,c\in J$, and $(\beta_j)_{j\in J}$ is a consistent basis.
Additionally, by Theorem \ref{thm:matching_matrix}, $[f(\idx_u\leq a)]_{\beta_{\idx_u}}^{\beta_{a}}$ is a matching matrix. 
Therefore $[f(c\leq a)]_{\beta_c}^{\beta_a}$ is a matching matrix.

Now, consider $\idx_u\leq c < a$, and let  $\{v_1,\ldots,v_r\} := \beta_{\idx_u}\setminus \Ker[\idx_u,a]$. 
By Theorem \ref{thm:matching_matrix}, this set maps injectively into $\beta_a$, and its image under   $f(\idx_u\leq a)$ spans $\im[\idx_u,a] = \im[c,a]$.
By functoriality, this implies that the image of the set $\{f(\idx_u\leq c)(v_1),\ldots, f(\idx_u\leq c)(v_r)\} \subseteq \beta_{c}$ maps injectively into the basis $\beta_a \subseteq f_a$ under $f(c\leq a)$.
Because $\beta_c$ contains a basis for $\Ker[c,a]$, we have, by the rank-nullity theorem, that $\beta_c\setminus\{f(\idx_u\leq c)(v_1),\ldots, f(\idx_u\leq c)(v_r)\}$ spans $\Ker[c,a]$. 
Therefore, it follows that $[f(c\leq a)]_{\beta_c}^{\beta_a}$ is a matching matrix.

Next, we look at maps that begin at $f_a$.
Take any $b\in J$ such that $b>a$. 
By definition, there exists $\idx_\ell\in S\cap J$ such that $\Ker[a,\idx_\ell]=\Ker[a,b]$.
If $a<\idx_\ell\leq b$, then $[f(a\leq b)]_{\beta_a}^{\beta_b} = [f(\idx_\ell\leq b)]_{\beta_{\idx_\ell}}^{\beta_b}[f(a\leq \idx_\ell)]_{\beta_a}^{\beta_{\idx_\ell}}$ is a matching matrix because both $[f(\idx_\ell\leq b)]_{\beta_{\idx_\ell}}^{\beta_b}$ and $[f(a\leq \idx_\ell)]_{\beta_a}^{\beta_{\idx_\ell}}$ (by Theorem \ref{thm:matching_matrix}) are matching matrices.

Now, suppose that $a<b\leq \idx_\ell$.
Let $v\in \beta_a\setminus \Ker[a,\idx_\ell]$ be given. 
Because $\Ker[a,\idx_\ell] = \Ker[a,b]$, the elements $f(a\le b)(v)$ and ${f(a \le i_\ell)(v)}$
are nonzero.
We know that $f(a\leq \idx_\ell)(v)\in \beta_{\idx_\ell}$, by Theorem \ref{thm:matching_matrix}. Therefore, because $(\beta_j)_{j\in J}$ is a consistent basis,
there exists $w\in \beta_b$ such that $f(b\leq \idx_\ell)(w)=f(a\leq \idx_\ell)(v)$. 

We will show that $f(a\leq b)(v)=w$. 
To see this, note that because $\Ker[a,b]=\Ker[a,\idx_\ell]$, functoriality implies that $\im[a,b]\cap \Ker[b,\idx_\ell]=0$. 
Let $\{w_1,\ldots,w_r\} \subseteq \beta_b$ be a basis for $\im[a,b]$, in which case, we can write $f(a\leq b)(v)=c_1w_1+\ldots+ c_rw_r$. 
By applying $f(b\leq \idx_\ell)$ to both sides, we get $f(a\leq \idx_\ell)(v)=c_1f(b\leq \idx_\ell)(w_1)+\ldots+c_rf(b\leq \idx_\ell)(w_r)$.
Because $\im[a,b]\cap\Ker[b,\idx_{\ell}]=0$ and $(\beta_j)_{j\in J}$ is a consistent basis with respect to $f$,
we must have $f(b\leq \idx_\ell)(w_1),\ldots,f(b\leq \idx_\ell)(w_r)\in \beta_{\idx_\ell}$, and because we also have $f(a\leq \idx_\ell)(v)\in\beta_{\idx_\ell}$, it follows that all but one of the $c_i$ are $0$, with the remaining one being $1$. 
Without loss of generality, suppose $c_1=1$ and $c_2=\ldots=c_r=0$, in which case, we have $f(a\leq \idx_\ell)(v)=f(b\leq \idx_\ell)(w_1)$; because $w\in\beta_b$ is the unique basis element in $\beta_{b}$ that maps to $f(a\leq \idx_\ell)(v)$, it follows that $w=w_1$.
This yields $f(a\leq b)(v) = w$, as desired.

From this, it follows that $f(a\leq b)$ maps $\beta_a\setminus\Ker[a,\idx_\ell]=\beta_a\setminus\Ker[a,b]$  into $\beta_b$. 
Moreover, functoriality implies that this map is injective, because the composite map $f(a \le \idx_\ell) = f(b \le \idx_\ell) \circ f(a \le b)$ is injective on $\beta_a\setminus\Ker[a,\idx_\ell]=\beta_a\setminus\Ker[a,b]$, by Theorem \ref{thm:matching_matrix}.
This implies that $[f(a\leq b)]_{\beta_a}^{\beta_b}$ is a matching matrix, as desired.
\end{proof}

Next, we show that for every $j\in J\cup\{a\}$, the basis $\beta_j$ contains a basis for every $\Ker[j,b]$ and every $\im[c,j]$. 
We know that for $j\in J$, the basis $\beta_j$ contains a basis for every $\Ker[j,b]$ and every $\im[c,j]$ because the indexed family $(\beta_j)_{j\in J}$ (without the basis $\beta_a$ appended) is assumed to be a consistent basis with respect to $f$ (see Definition \ref{def:bases}).
Therefore, we only need to show this for $j=a$.

We use the following fact.

\begin{lemma}\label{lemma:consistent basis_im_ker}
    Suppose that $f: I \to R\mathrm{-Mod}$ is any persistence module with consistent basis $(\beta_i)_{i \in I}$.
    Then, for any $a\in I$, the basis $\beta_a$ contains a basis for every submodule of the form $\im[c,a]$  and $\Ker[a,b]$.
\end{lemma}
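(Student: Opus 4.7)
The plan is to unpack the matching-matrix hypothesis directly. By definition of a consistent basis, the matrix of each structure map $g(c \le a)$ (respectively $g(a \le b)$) in the bases $\gamma_c, \gamma_a$ (respectively $\gamma_a, \gamma_b$) has $\{0,1\}$ entries with at most one nonzero entry per row and per column. Concretely, every $v \in \gamma_c$ either maps to zero or to exactly one element of $\gamma_a$, and distinct basis elements with nontrivial image hit distinct targets; symmetrically for $g(a \le b)$ into $\gamma_b$. This structural observation does essentially all of the work.

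For the kernel claim, I would set $\gamma_a' := \gamma_a \cap \Ker[a,b]$, the subset of $v \in \gamma_a$ whose image under $g(a \le b)$ is zero. Clearly $\mathrm{span}(\gamma_a') \subseteq \Ker[a,b]$. For the reverse containment, expand any $u \in \Ker[a,b]$ in the basis $\gamma_a$ as $u = \sum_{v \in \gamma_a} c_v v$ and apply $g(a \le b)$; the remaining basis elements $v \in \gamma_a \setminus \gamma_a'$ map to distinct elements of $\gamma_b$ and are therefore linearly independent, forcing $c_v = 0$ for all $v \notin \gamma_a'$. Hence $\gamma_a'$ is a basis for $\Ker[a,b]$ contained in $\gamma_a$.

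For the image claim, I would define $\gamma_a'' := \{ g(c \le a)(w) : w \in \gamma_c,\ g(c \le a)(w) \neq 0 \}$. The matching-matrix property places $\gamma_a''$ inside $\gamma_a$ as a collection of pairwise distinct basis elements; it spans $\im[c,a]$ by construction and is linearly independent as a subset of the basis $\gamma_a$, so it is a basis for $\im[c,a]$. The degenerate cases are immediate from Definition \ref{def:imker}: if $b < a$ then $\Ker[a,b] = 0$ has basis $\emptyset \subseteq \gamma_a$, and if $c > a$ then $\im[c,a] = g_a$ has basis $\gamma_a$ itself.

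There is no real obstacle here; the statement is essentially a translation of the matching-matrix definition. The only point where care is needed is in invoking the fact that the nonzero images of distinct basis elements under a matching matrix are themselves \emph{distinct} basis elements of the target, which gives the linear independence needed in both directions. Everything else is bookkeeping.
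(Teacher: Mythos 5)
Your proof is correct and follows essentially the same route as the paper: split $\gamma_a$ (resp.\ $\gamma_c$) according to whether elements vanish under the relevant structure map, then use the fact that a matching matrix sends the surviving basis elements injectively into the target basis. The only cosmetic difference is that the paper invokes rank--nullity for the kernel half, whereas you argue the reverse containment $\Ker[a,b]\subseteq\mathrm{span}(\gamma_a')$ directly by expanding a kernel element in $\gamma_a$ and applying $g(a\le b)$; your version is marginally more self-contained and avoids the implicit appeal to complements that makes the rank--nullity step rigorous over a PID.
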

\begin{proof}
    For any $x,y\in I$ such that $x\leq y$, define $\mathcal{K}_{x,y} = \beta_x \cap \Ker[x,y]$, and $\mathcal{I}_{x,y} = \beta_x\setminus \mathcal{K}_{x,y}$. 
    Let $c\leq a \le b$ be given.
    Clearly, $f(c\leq a)(\mathcal{I}_{c,a})$ spans the image submodule $\im[c,a]$. 
    Because $[f(c\leq a)]_{\beta_c}^{\beta_a}$ is a matching matrix, the set $\mathcal{I}_{c,a}$ maps injectively into $\beta_a$, and therefore, $f(c\leq a)(\mathcal{I}_{c,a})\subseteq \beta_a$ forms a basis for $\im[c,a]$.
    Moreover, by the rank nullity theorem, $\mathcal{K}_{a,b}$ forms a basis for $\Ker[a,b]$. 
\end{proof}


\begin{theorem}\label{theorem:image/kernel_containment}
    The basis $\beta_a$ for $f_a$ contains a basis for every $\Ker[a,b]$ and $\im[c,a]$. 
\end{theorem}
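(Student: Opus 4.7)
The plan is to reduce the statement to the finitely-indexed setting and then invoke the decomposition theorem from \cite{Luo2023}. The groundwork has already been laid by Lemma \ref{lemma:valid_A^pq} and by the careful choice of $S$ at the start of Section \ref{sec:basis_partition}, so the argument should be short.

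First, I would use the finite set $S=\{\idx_1,\ldots,\idx_r\}$ chosen to represent all distinct kernels out of and images into $f_a$ (such a set exists by Lemma \ref{lemma:finite_imker_family}). For any $b\in I$ with $a\le b$, by construction of $S$ there is some $\idx_q\in S$ with $\Ker[a,b]=\Ker[a,\idx_q]$; likewise, for any $c\in I$ with $c\le a$, there is some $\idx_p\in S$ with $\im[c,a]=\im[\idx_p,a]$. Thus it suffices to exhibit subsets of $\beta_a$ that form bases for $\Ker[a,\idx_q]$ and $\im[\idx_p,a]$, for each $\idx_p,\idx_q\in S$.

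Next, I would invoke Lemma \ref{lemma:valid_A^pq}, which establishes that, for every pair $(p,q)$, the submodule $A_a^{pq}=\mathrm{span}(\beta_a^{pq})$ is a complement of $\IK_a[\idx_{p-1},\idx_q]+\IK_a[\idx_p,\idx_{q-1}]$ in $\IK_a[\idx_p,\idx_q]$. This is exactly the hypothesis of \cite[Theorem 17]{Luo2023} applied to the finitely-indexed restriction $f|_S$, which inherits the free-cokernel condition from $f$. Consequently, \cite[Theorem 18]{Luo2023}, reproduced in equation \eqref{eq:decomp_a_finite}, yields
\[
\bigoplus_{p,q} A_a^{pq} = f_a, \qquad \bigoplus_{q\le n} A_a^{pq} = \Ker[a,\idx_n], \qquad \bigoplus_{p\le m} A_a^{pq} = \im[\idx_m,a].
\]
Since each $\beta_a^{pq}$ is a basis of $A_a^{pq}$ by construction (cf.\ Lemma \ref{lemma:actual_basis}), the subsets $\bigsqcup_{q\le n}\beta_a^{pq}$ and $\bigsqcup_{p\le m}\beta_a^{pq}$ of $\beta_a$ form bases of $\Ker[a,\idx_n]$ and $\im[\idx_m,a]$, respectively.

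I do not anticipate a substantive obstacle: the main algebraic content has already been absorbed into Lemma \ref{lemma:valid_A^pq} and the finite-case results of \cite{Luo2023}. The only care required is to justify that \cite[Theorem 18]{Luo2023} applies to the restricted module $f|_S$, which is immediate because $S$ is finite and cokernels of structure maps of $f|_S$ remain free cokernels in $f$.
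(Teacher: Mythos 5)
Your proof is correct, but it routes through a different pair of ingredients than the paper's. The paper observes that Corollary~\ref{cor:matching_matrix} makes $(\beta_j)_{j\in S}$ a consistent basis for $f|_S$, then applies Lemma~\ref{lemma:consistent basis_im_ker} to read bases for $\Ker[a,\idx_q]$ and $\im[\idx_p,a]$ off the matching-matrix structure, and finally uses the defining property of $S$ to cover arbitrary $b,c\in I$. You instead go straight to the algebra: Lemma~\ref{lemma:valid_A^pq} certifies that each $A_a^{pq}=\mathrm{span}(\beta_a^{pq})$ is a complement of $\IK_a[\idx_{p-1},\idx_q]+\IK_a[\idx_p,\idx_{q-1}]$ in $\IK_a[\idx_p,\idx_q]$, so \cite[Theorem 18]{Luo2023} (equation~\eqref{eq:decomp_a_finite}), applied to the finite restriction $f|_S$, gives $\bigoplus_{q\le n}A_a^{pq}=\Ker[a,\idx_n]$ and $\bigoplus_{p\le m}A_a^{pq}=\im[\idx_m,a]$, from which the basis claim is immediate since each $\beta_a^{pq}$ is a basis of $A_a^{pq}$. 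Both arguments are short and lean entirely on results already in place; yours is the more algebraically direct and is the natural extension of the proof of Lemma~\ref{lemma:actual_basis} (which already invokes \cite[Theorem 18]{Luo2023} for the $p,q$-total sum), while the paper's recycles the matching-matrix corollary it just proved for the structure-map half of the consistency condition. Your one caveat --- that the free-cokernel hypothesis descends to $f|_S$ so the finite-index results of \cite{Luo2023} apply --- is handled correctly, and the reduction from arbitrary $b,c$ to elements of $S$ is the same in both proofs.
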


\begin{proof}
Theorem \ref{thm:matching_matrix} already established that $(\beta_j)_{j \in S}$ is a consistent basis for $f|_S$.  Therefore, by Lemma \ref{lemma:consistent basis_im_ker}, the set $\beta_a$ contains a basis for $\im[s,a]$ and $\Ker[a,s]$ for all $s \in S$. 
By hypothesis, every submodule of the form $\Ker[a,b]$ (respectively, $\im[c,a]$) can be expressed in the form $\Ker[a,s]$ (respectively, $\im[s,a]$) for some $s \in S$. 
The desired conclusion follows.
%
%
%
\end{proof}




\section{\textbf{Application: Integer decomposition, relative homology, and field-choice independence in persistent homology}}
\label{sec:applications}

{We relate the decomposability of integer persistent homology to the sensitivity of persistence diagrams to choice of coefficient field.}
{Let $\mathcal{K}$ be a filtration, viewed as a functor from $I$ to the category of topological spaces and inclusions.}
{Let 
\jerry{${H}_n(\mathcal{K}; F)$}
be the functor obtained by composing $\mathcal{K}$ with the $n$th-homology functor over a field $F$.} 
The \emph{persistence diagram} of $\mathcal{K}$ with coefficients in $F$, denoted $\mathrm{PD}_n^F(\mathcal{K})$, is the multiset of intervals corresponding to an interval decomposition of 
\jerry{${H}_n(\mathcal{K}; F)$.}
The \emph{field-independence problem} asks: under what conditions is the persistence diagram $\mathrm{PD}_n^F(\mathcal{K})$ independent of the choice of $F$?  
Obayashi and Yoshiwaki \cite[\jerry{Theorem 1.6}]{obayashi_field_2023} 
\jerry{proved that, when $I$ is finite, a persistence diagram of a filtration is independent of field choice if and only if the relative homology group $H_n(\mathcal{K}_p, \mathcal{K}_q; \mathbb{Z})$ is free for all $q \le p$, and $H_{n-1}(\mathcal{K}_p;\mathbb{Z})$ is free for all $p$.}
We extend this to the infinite case. 

\begin{theorem}
\label{thm:OurFieldIndependence}
    Suppose that the homology groups $H_{n-1}(\mathcal{K}_a;\Z)$ and $H_{n}(\mathcal{K}_a;\Z)$  are free and finitely-generated for all $a \in I$. 
    Given any $a\leq b$, consider the following commutative diagram, where $(*)$ are extension of scalars and $(\dagger)$ arise from the universal coefficient theorem.
    \begin{equation}
    \label{eq:tensor_cd}
    \begin{tikzcd}
	{H_n(\mathcal{K}_a;\mathbb{Z})} & {H_n(\mathcal{K}_a;\mathbb{Z}) \otimes F} & {H_n(\mathcal{K}_a;F)} \\
	{H_n(\mathcal{K}_b;\mathbb{Z})} & {H_n(\mathcal{K}_b;\mathbb{Z})\otimes F} & {H_n(\mathcal{K}_b;{F})}
	\arrow["{(*)}", from=1-1, to=1-2]
	\arrow[from=1-1, to=2-1]
	\arrow["{(\dagger)}", "{\cong}"', from=1-2, to=1-3]
    \arrow[from=1-2, to=2-2]
	\arrow[from=1-3, to=2-3]
	\arrow["{(*)}", from=2-1, to=2-2]
	\arrow["{(\dagger)}", "{\cong}"', from=2-2, to=2-3]
    \end{tikzcd}
    \end{equation}

    \noindent The maps $(\dagger)$ are isomorphisms, and the following are equivalent.
    \begin{enumerate}
        \item The  module $H_n(\mathcal{K};\Z)$ splits as a direct sum of interval submodules.    
        \item The cokernel of each induced  map $H_n(\mathcal{K}_a;\Z) \to H_n(\mathcal{K}_b;\Z)$  is free.
        \item For all $a \le b$ in $I$, the relative homology group $H_n(\mathcal{K}_b,\mathcal{K}_a;\Z)$ is free. 
        \item The persistence diagram $\textrm{PD}_n^F(\mathcal{K})$ is identical for every coefficient field $F$.   
    \end{enumerate}
    
\end{theorem}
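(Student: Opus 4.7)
The plan is to establish the isomorphism claim for $(\dagger)$ and then the four equivalences (1)--(4) via five short reductions. For the $(\dagger)$ arrows, the universal coefficient theorem gives $H_n(\mathcal{K}_a;F) \cong (H_n(\mathcal{K}_a;\Z)\otimes F) \oplus \mathrm{Tor}_1^{\Z}(H_{n-1}(\mathcal{K}_a;\Z), F)$; the $\mathrm{Tor}$ term vanishes because $H_{n-1}(\mathcal{K}_a;\Z)$ is free by hypothesis, and naturality of the UCT makes the diagram commute.

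For (1) $\Leftrightarrow$ (2), I would apply Theorem \ref{thm:problem_statement} directly to the persistence module $\mathcal{H}_n(\mathcal{K};\Z)$, which is pointwise free and finitely-generated by hypothesis. For (2) $\Leftrightarrow$ (3), the long exact sequence of the pair $(\mathcal{K}_b,\mathcal{K}_a)$ yields a short exact sequence
\[
0 \to \mathrm{coker}\bigl(H_n(\mathcal{K}_a;\Z) \to H_n(\mathcal{K}_b;\Z)\bigr) \to H_n(\mathcal{K}_b,\mathcal{K}_a;\Z) \to K \to 0,
\]
where $K$ is the kernel of the induced map on $H_{n-1}$ with $\Z$ coefficients. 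Since $K$ is a submodule of the free module $H_{n-1}(\mathcal{K}_a;\Z)$, it is itself free over the PID $\Z$, so the sequence splits; hence $H_n(\mathcal{K}_b,\mathcal{K}_a;\Z)$ is free if and only if the cokernel is.

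For (1) $\Rightarrow$ (4), I would tensor an integer interval decomposition of $\mathcal{H}_n(\mathcal{K};\Z)$ termwise with $F$: since $-\otimes F$ preserves direct sums and sends a $\Z$-interval module to the corresponding $F$-interval module, and since the $(\dagger)$ arrows identify $\mathcal{H}_n(\mathcal{K};\Z)\otimes F$ with $\mathcal{H}_n(\mathcal{K};F)$ functorially in $a$, the resulting interval decomposition of $\mathcal{H}_n(\mathcal{K};F)$ has the same underlying multiset of intervals for every $F$. For (4) $\Rightarrow$ (2), I would use the fact that in any pointwise finite-dimensional, totally-ordered, field-coefficient persistence module, the rank of each structure map equals the number of intervals in the decomposition that contain both endpoints; thus field-independence of $\mathrm{PD}_n^F(\mathcal{K})$ forces $\dim_F \mathrm{image}(H_n(f(a\le b);F))$ to be independent of $F$. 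Writing $C := \mathrm{coker}(H_n(\mathcal{K}_a;\Z) \to H_n(\mathcal{K}_b;\Z)) \cong \Z^r \oplus \bigoplus_i \Z/p_i^{k_i}$ and using right-exactness of $-\otimes F$ together with $(\dagger)$, one obtains $\dim_F(C \otimes F) = r + \#\{i : \mathrm{char}(F) = p_i\}$; constancy of the image dimension (equivalently, of $\dim_F(C\otimes F)$, since $H_n(\mathcal{K}_b;F)$ has $F$-dimension equal to the free rank of $H_n(\mathcal{K}_b;\Z)$) across all $F$ therefore forces the torsion part of $C$ to vanish, so $C$ is free.

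I expect the main obstacle to lie in (4) $\Rightarrow$ (2): one must track that the (possibly infinite) persistence diagram genuinely records the rank of each individual structure map, and then extract the elementary-divisor structure of an integral cokernel from characteristic-dependent rank jumps. The remaining implications are essentially bookkeeping once Theorem \ref{thm:problem_statement} and UCT are in hand.
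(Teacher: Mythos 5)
Your proposal is correct and establishes all four equivalences, but it reaches them along a noticeably different path than the paper. The paper proves the single cycle $(4)\Rightarrow(3)\Rightarrow(2)\Rightarrow(1)\Rightarrow(4)$: it handles $(4)\Rightarrow(3)$ by restricting to arbitrary finite subsets of $I$ and invoking Obayashi--Yoshiwaki's Theorem~1.9, handles $(3)\Rightarrow(2)$ by citing the argument of Luo~\cite{Luo2023}, obtains $(2)\Rightarrow(1)$ from Theorem~\ref{thm:problem_statement}, and closes the loop with the same tensor/UCT argument you give for $(1)\Rightarrow(4)$. You instead prove $(1)\Leftrightarrow(2)$, $(2)\Leftrightarrow(3)$, $(1)\Rightarrow(4)$, and $(4)\Rightarrow(2)$. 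Your $(2)\Leftrightarrow(3)$ is a spelled-out version of the long-exact-sequence-of-a-pair argument that the paper delegates to a citation, so that is a difference of exposition rather than substance. The genuine divergence is $(4)\Rightarrow(2)$: rather than route through relative homology and the finite-index result of Obayashi--Yoshiwaki, you argue directly that the persistence diagram determines $\mathrm{rank}\,H_n(f(a\le b);F)$ for each field, translate this (via right-exactness of $-\otimes F$ and the UCT naturality square) into constancy of $\dim_F(C\otimes F)$ where $C$ is the integral cokernel, and then read off from the elementary-divisor form $C\cong\Z^r\oplus\bigoplus_i\Z/p_i^{k_i}$ that $\dim_F(C\otimes F)=r+\#\{i:\mathrm{char}(F)=p_i\}$, so constancy over $\Q$ and all $\mathbb{F}_p$ kills the torsion. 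This is more self-contained and elementary, bypassing both the citation to Obayashi--Yoshiwaki and the relative-homology detour entirely; the cost is that you have to justify (as you do flag) that for infinitely-indexed pointwise finite-dimensional modules over a totally ordered set the barcode is a well-defined complete invariant recording structure-map ranks, which is exactly Crawley-Boevey's theorem plus Azumaya uniqueness. Both routes are sound; yours is arguably more transparent for a reader who does not want to unpack the cited finite-index result.
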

\begin{proof}
    If $\textrm{PD}_n^F(\mathcal{K})$ is identical for every coefficient field $F$, then the same holds for $\mathcal{K}|_S$, for every finite subset $S \subseteq I$. It follows by \cite[Theorem 1.9]{obayashi_field_2023} that (4) implies (3). The argument from \cite[Theorem 4]{Luo2023} carries over without modification to show that (3) implies (2). Our main result, Theorem \ref{thm:problem_statement}, shows that (2) implies (1).

    {Because $H_{n-1}(\mathcal{K}_a;\Z)$ and $H_{n}(\mathcal{K}_a;\Z)$ are free for all $a$, the maps $(\dagger)$ in \eqref{eq:tensor_cd} are isomorphisms, by the universal coefficient theorem. 
    The extension maps $(*)$ induce homomorphisms $\varphi: H_n(\mathcal{K};\mathbb{Z}) \to H_n(\mathcal{K};\mathbb{Z}) \otimes F$ of persistence modules, which preserves direct sums, in the sense that $\varphi(\bigoplus_i A_i) = \bigoplus_i\varphi(A_i)$. 
    Therefore, the same holds for the map $\psi: H_n(\mathcal{K};\mathbb{Z}) \to H_n(\mathcal{K};F) $ obtained by composing $\varphi$ with the isomorphism $H_n(\mathcal{K};\mathbb{Z}) \otimes F \xrightarrow{\cong} H_n(\mathcal{K};F)$.}  
    Consequently, if $H_n(\mathcal{K}; \Z)$ splits as a direct sum of interval submodules, then $\psi$ yields a corresponding decomposition of $H_n(\mathcal{K}; F)$, with the same multiset of intervals. 
    This establishes that (1) implies (4). 
    The same homomorphism carries consistent bases to consistent bases.
    %
\end{proof}

\begin{remark}
    An analogous result holds for \textbf{persistent cohomology}.
\end{remark}

\appendix 

\section{\textbf{A useful short exact sequence}}
\label{appendix:ses}

Here we present a short exact sequence used to prove Lemma \ref{lemma:valid_A^pq}. 
The notation is the same as in the proof of that result.

\begin{lemma}\label{lemma:short_exact_sequence}
    Fix $\idx_s<\idx_p\leq a<\idx_t<\idx_q$, and consider the diagram 
\[\begin{tikzcd}[column sep=scriptsize, row sep=scriptsize, font=\small, cramped]
0 & {\IK_a[\idx_{p-1},\idx_q] + \IK_a[\idx_p,\idx_{q-1}]} & {\IK_a[\idx_p,\idx_q]} & {A^{pq}_{\idx_t}} & 0 \\
&&& {\IK_{\idx_t}[\idx_p,\idx_q]}
\arrow[from=1-1, to=1-2]
\arrow["\iota", hook, from=1-2, to=1-3]
\arrow["\phi", dashed, two heads, from=1-3, to=1-4]
\arrow["f(a\leq \idx_t)|_{\IK_a[\idx_p,\idx_q]}"', two heads, from=1-3, to=2-4]
\arrow["\ell"', dashed, bend right=45, from=1-4, to=1-3]
\arrow[from=1-4, to=1-5]
\arrow["\pi"', two heads, from=2-4, to=1-4]
\end{tikzcd}\]
where
$\iota$ is the inclusion map;
$\pi$ is the projection map  which restricts to zero on $\IK_{\idx_t}[\idx_{p-1},\idx_q]+\IK_{\idx_t}[\idx_{p},\idx_{q-1}]$ and to the identity map on $A^{pq}_{\idx_t}$; and
$\phi = \pi\circ f(a\leq \idx_t)|_{\IK_a[\idx_p,\idx_q]}$. Then the sequence consisting of $\iota$ and $\phi$ is exact.
Moreover, this sequence splits. 
\end{lemma}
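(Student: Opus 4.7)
\medskip
\noindent
\textbf{Proof proposal for Lemma \ref{lemma:short_exact_sequence}.}
The plan is to verify exactness at the three positions separately, then deduce that the sequence splits from the fact that its rightmost term is free. Injectivity of $\iota$ is automatic, so the substantive work is (a) surjectivity of $\phi$, (b) the inclusion $\im\iota \subseteq \ker\phi$, and (c) the reverse inclusion $\ker\phi \subseteq \im\iota$. Throughout I will repeatedly use the factorizations $f(\idx_p\leq \idx_t) = f(a\leq\idx_t)\circ f(\idx_p\leq a)$ and $f(a\leq\idx_q) = f(\idx_t\leq\idx_q)\circ f(a\leq\idx_t)$, together with the inequality $\idx_t \leq \idx_{q-1} < \idx_q$, which follows from $t<q$.

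For surjectivity of $\phi$, take $w \in A^{pq}_{\idx_t} \subseteq \IK_{\idx_t}[\idx_p,\idx_q]$. Since $w \in \im[\idx_p,\idx_t]$, lift it to some $u \in f_{\idx_p}$ with $f(\idx_p\leq \idx_t)(u) = w$, and set $v := f(\idx_p\leq a)(u)$. By construction $v \in \im[\idx_p,a]$, and $f(a\leq \idx_q)(v) = f(\idx_t\leq \idx_q)(w) = 0$ because $w \in \ker[\idx_t,\idx_q]$; hence $v \in \IK_a[\idx_p,\idx_q]$. Then $\phi(v) = \pi(w) = w$ since $\pi|_{A^{pq}_{\idx_t}} = \mathrm{id}$. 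For $\im\iota \subseteq \ker\phi$, it suffices to check that $f(a\leq\idx_t)$ carries $\IK_a[\idx_{p-1},\idx_q]$ into $\IK_{\idx_t}[\idx_{p-1},\idx_q]$ and carries $\IK_a[\idx_p,\idx_{q-1}]$ into $\IK_{\idx_t}[\idx_p,\idx_{q-1}]$; both inclusions are immediate from the factorizations above, and these targets are annihilated by $\pi$.

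The main obstacle is the reverse inclusion $\ker\phi \subseteq \im\iota$. Given $v \in \IK_a[\idx_p,\idx_q]$ with $\phi(v)=0$, we know $f(a\leq\idx_t)(v) \in \IK_{\idx_t}[\idx_{p-1},\idx_q] + \IK_{\idx_t}[\idx_p,\idx_{q-1}]$, so write $f(a\leq\idx_t)(v) = w_1 + w_2$ with $w_1 \in \IK_{\idx_t}[\idx_{p-1},\idx_q]$ and $w_2 \in \IK_{\idx_t}[\idx_p,\idx_{q-1}]$. Using that $w_1 \in \im[\idx_{p-1},\idx_t]$ and $w_2 \in \im[\idx_p,\idx_t]$, lift each through $f(\idx_{p-1}\leq\idx_t)$ and $f(\idx_p\leq\idx_t)$, respectively, and push the lifts forward to $f_a$ to produce $v_1 \in \im[\idx_{p-1},a]$ and $v_2 \in \im[\idx_p,a]$ with $f(a\leq\idx_t)(v_i)=w_i$. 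Chasing through the two factorizations gives $v_1 \in \IK_a[\idx_{p-1},\idx_q]$ (using $\ker[\idx_t,\idx_q] \ni w_1$) and $v_2 \in \IK_a[\idx_p,\idx_{q-1}]$ (using $\idx_t \leq \idx_{q-1}$, so that $\ker[\idx_t,\idx_{q-1}] \ni w_2$ implies $f(a\leq\idx_{q-1})(v_2)=0$). The residual $v - v_1 - v_2$ lies in $\im[\idx_p,a] \cap \ker[a,\idx_t]$, and $\ker[a,\idx_t] \subseteq \ker[a,\idx_{q-1}]$, so $v - v_1 - v_2 \in \IK_a[\idx_p,\idx_{q-1}]$. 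Absorbing this term into $v_2$ exhibits $v$ as a sum of an element of $\IK_a[\idx_{p-1},\idx_q]$ and an element of $\IK_a[\idx_p,\idx_{q-1}]$, as required.

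Finally, splitting is immediate: $A^{pq}_{\idx_t}$ is a submodule of the free module $f_{\idx_t}$ over the PID $R$, hence free and therefore projective, so any surjection onto it admits a section $\ell$. This produces the dashed splitting indicated in the diagram and completes the proof.
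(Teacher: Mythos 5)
Your proof is correct, and it takes a genuinely different and more elementary route than the paper's. The paper proves surjectivity by invoking \cite[Theorem 15]{Luo2023} to identify $f(a\leq\idx_t)(\IK_a[\idx_p,\idx_q])$ with $\IK_{\idx_t}[\idx_p,\idx_q]\cap\im[a,\idx_t]$ and then simplifying the intersection, and it establishes exactness in the middle by computing the preimage $\bigl(f(a\leq\idx_t)|\bigr)^{-1}\bigl(\IK_{\idx_t}[\idx_{p-1},\idx_q]+\IK_{\idx_t}[\idx_p,\idx_{q-1}]\bigr)$ via \cite[Theorem 16]{Luo2023} together with the modularity of the submodule lattice. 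You instead run a direct element chase: for surjectivity you lift a generator $w\in A^{pq}_{\idx_t}\subseteq\im[\idx_p,\idx_t]$ back to $f_{\idx_p}$ and push forward to $f_a$, and for $\ker\phi\subseteq\im\iota$ you decompose $f(a\leq\idx_t)(v)=w_1+w_2$, lift each $w_i$ to $f_a$ through $f_{\idx_{p-1}}$ or $f_{\idx_p}$, and absorb the residual $v-v_1-v_2\in\im[\idx_p,a]\cap\Ker[a,\idx_t]\subseteq\IK_a[\idx_p,\idx_{q-1}]$ into $v_2$ using $\idx_t\le\idx_{q-1}$. The key inequality you rely on, $\idx_t\le\idx_{q-1}$, follows from $\idx_t<\idx_q$ and the strict ordering of $S$, and it is exactly the same fact powering the paper's lattice identity $\Ker[a,\idx_t]\cap\IK_a[\idx_p,\idx_q]=\IK_a[\idx_p,\idx_t]\subseteq\IK_a[\idx_p,\idx_{q-1}]$. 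What you gain is self-containment --- no appeal to the finite-index machinery of \cite{Luo2023} or to modularity --- at the cost of more bookkeeping with explicit lifts; what the paper gains is a shorter argument once the lattice-theoretic results are in hand, and as a byproduct it records the stronger fact that $f(a\leq\idx_t)$ restricted to $\IK_a[\idx_p,\idx_q]$ is already surjective onto $\IK_{\idx_t}[\idx_p,\idx_q]$, which is convenient for the construction in case (3) of Lemma \ref{lemma:valid_A^pq}. Your argument does establish this too, implicitly, since you produce a preimage of $w$ in $\IK_a[\idx_p,\idx_q]$ with $f(a\leq\idx_t)(v)=w$ exactly, but it is worth making that explicit if the lemma is to be used as the paper uses it.
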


\begin{proof}
    {The sequence is exact at $\IK_a[\idx_{p-1},\idx_q] + \IK_a[\idx_p,\idx_{q-1}]$ because $\iota$ is injective.} 
    
    {To check exactness at $A_{\idx_t}^{pq}$, because $\pi$ is surjective, it suffices show that $f(a\leq i_t)$ restricts to a surjection $\IK_a[\idx_p,\idx_q] \to \IK_{\idx_t}[\idx_p,\idx_q]$.}
    By \cite[Theorem 15]{Luo2023}, we have $f(a\leq \idx_t)(\IK_a[\idx_p,\idx_q]) = \IK_{\idx_t}[\idx_p,\idx_q]\cap \im[a,\idx_t]$.
    However, because $\idx_p\leq a$, we have $\im[\idx_p,\idx_t]\subseteq \im[a,\idx_t]$, which implies that
    \begin{align*}
        \IK_{\idx_t}[\idx_p,\idx_q]\cap \im[a,\idx_t] &= \im[\idx_p,\idx_t]\cap\Ker[\idx_t,\idx_q]\cap \im[a,\idx_t]\\
        &= \im[\idx_p,\idx_t]\cap\Ker[\idx_t,\idx_q]\\
        &= \IK_{\idx_t}[\idx_p,\idx_q]\,{.}
    \end{align*}

    {
    Finally, we show exactness at $\IK_a[\idx_p,\idx_q]$. It is enough to show that} 
    \begin{align}
    \Bigl (f(a\leq \idx_t)|_{\IK_a[\idx_p,\idx_p]} \Bigr)^{-1} \Bigl(\IK_{\idx_t}[\idx_{p-1},\idx_q] &+ \IK_{\idx_t}[\idx_p,\idx_{q-1}] \Bigr) \label{eq:sufficient_equality_ses}\\
    &= \IK_a[\idx_{p-1},\idx_q] + \IK_a[\idx_p,\idx_{q-1}]\,.\nonumber
    \end{align}
    Note that by \cite[Theorem 16]{Luo2023}, we have 
    \[
        f(a\leq {\idx_t})^{-1}\Bigl(\IK_{\idx_t}[\idx_{p-1},\idx_q] + \IK_{\idx_t}[\idx_p,\idx_{q-1}] \Bigr) = \IK_{{a}}[\idx_{p-1},\idx_q] + \IK_{{a}}[\idx_p,\idx_{q-1}] +\Ker[a,\idx_t]\,.\]
    Therefore, it follows that 
        \begin{align}
        \Bigl(f(a\leq {\idx_t})|_{\IK_{a}[\idx_p,\idx_q]} \Bigr)^{-1}&\Bigl(\IK_{\idx_t}[\idx_{p-1},\idx_q] + \IK_{\idx_t}[\idx_p,\idx_{q-1}] \Bigr)\nonumber \\
        &= \Bigl(\IK_{{a}}[\idx_{p-1},\idx_q] + \IK_{{a}}[\idx_p,\idx_{q-1}]
        +\Ker[a,\idx_t]\Bigr)\cap \IK_{a}[\idx_p,\idx_q]\label{eq:first_ses}\\
        &=\Bigl(\IK_{{a}}[\idx_{p-1},\idx_q] + \IK_{{a}}[\idx_p,\idx_{q-1}]\Bigr)+ \Bigl ( \Ker[a,\idx_t]\cap \IK_{a}[\idx_p,\idx_q] \Bigr )\label{eq:second_ses}\\
        &= \Bigl (\IK_{{a}}[\idx_{p-1},\idx_q] + \IK_{{a}}[\idx_p,\idx_{q-1}] \Bigr) + \IK_{a}[\idx_p,\idx_t]\label{eq:third_ses}\\
        &= \IK_{{a}}[\idx_{p-1},\idx_q] + \IK_{{a}}[\idx_p,\idx_{q-1}]\,.\label{eq:fourth_ses}
    \end{align}
    
    The equality between \eqref{eq:first_ses} and \eqref{eq:second_ses} follows from the modularity of submodules\footnote{{An order lattice is \emph{modular} if, for any $a \le b$, the identity $(a \vee x) \wedge b = a \vee (x \wedge b)$ holds for all $x$. The lattice of submodules of a module over a ring, ordered under inclusion, is always modular. In this case, we take 
        $a  = \IK_{{a}}[\idx_{p-1}, \idx_q] + \IK_{{a}}[\idx_p,\idx_{q-1}]$,
        $b  = \IK_{{a}}[\idx_{p},\idx_q]$, and 
        $x  = \Ker[a,\idx_{{t}}]$.
    }}, and the equality between \eqref{eq:second_ses} and \eqref{eq:third_ses} holds because $\idx_t < \idx_q$ implies that $\Ker[a,\idx_t] \cap \IK_a[\idx_p, \idx_q] = \Ker[a,\idx_t] \cap \Ker[a, \idx_q]  \cap \mathrm{Im}[\idx_p, a] = \Ker[a, \idx_t] \cap \mathrm{Im}[\idx_p, a]  = \IK_a[\idx_p, \idx_t]$. 
    The equality between \eqref{eq:third_ses} and \eqref{eq:fourth_ses} holds because $\idx_t < \idx_q$ implies that $\IK_a[\idx_p, \idx_t] \subseteq \IK_a[\idx_p, \idx_{q-1}]$. 
    Therefore, our desired equality \eqref{eq:sufficient_equality_ses} follows.

    This establishes that the sequence is exact. 
    Moreover, the sequence splits because $A^{pq}_{\idx_t}$ is a submodule of a free module, and therefore is free.  
\end{proof}

\section*{Acknowledgements}
We thank Marzieh Eidi for helpful conversations. We also thank the anonymous referee for helpful comments and suggestions.

\bibliographystyle{amsplain}
\bibliography{references}

\providecommand{\bysame}{\leavevmode\hbox to3em{\hrulefill}\thinspace}
\providecommand{\MR}{\relax\ifhmode\unskip\space\fi MR }
\providecommand{\MRhref}[2]{%
  \href{http://www.ams.org/mathscinet-getitem?mr=#1}{#2}
}
\providecommand{\href}[2]{#2}
\begin{thebibliography}{10}

\bibitem{botnan_2017}
Magnus~Bakke Botnan, \emph{Interval decomposition of infinite zigzag persistence modules}, Proceedings of the American Mathematical Society \textbf{145} (2017), no.~8, 3571--3577.

\bibitem{botnan_crawleybevey_2020}
Magnus~Bakke Botnan and William Crawley-Boevey, \emph{Decomposition of persistence modules}, Proceedings of the American Mathematical Society \textbf{148} (2020), no.~11, 4581--4596.

\bibitem{carlsson_desilva_2010}
Gunnar Carlsson and Vin de~Silva, \emph{Zigzag persistence}, Foundations of Computational Mathematics \textbf{10} (2010), no.~4, 367--405.

\bibitem{crawleybevey_2015}
William Crawley-Boevey, \emph{Decomposition of pointwise finite-dimensional persistence modules}, Journal of Algebra and Its Applications \textbf{14} (2015), no.~05, 1550066.

\bibitem{de_silva_persistent_2009}
Vin De~Silva and Mikael Vejdemo-Johansson, \emph{Persistent cohomology and circular coordinates}, Proceedings of the twenty-fifth annual symposium on {Computational} geometry (Aarhus Denmark), Association for Computing Machinery, 2009, pp.~227--236 (en).

\bibitem{edelsbrunner_topological_2002}
Herbert Edelsbrunner, D.~Letscher, and A.~Zomorodian, \emph{Topological persistence and simplification}, Discrete \& Computational Geometry \textbf{28} (2002), no.~4, 511--533 (en).

\bibitem{gabriel_unzerlegbare_1972}
Peter Gabriel, \emph{Unzerlegbare darstellungen {I}}, Manuscripta Mathematica \textbf{6} (1972), 71--103.

\bibitem{GabrielRoiter1992}
Peter Gabriel and Andrei~V. Roiter, \emph{Representations of finite-dimensional algebras}, Encyclopaedia of Mathematical Sciences, vol.~73, Springer, Berlin, Heidelberg, 1992.

\bibitem{ghrist_barcodes_2007}
Robert Ghrist, \emph{Barcodes: {The} persistent topology of data}, Bulletin of the American Mathematical Society \textbf{45} (2007), no.~01, 61--76 (en).

\bibitem{ghrist_saecular_2021}
Robert Ghrist and Gregory Henselman-Petrusek, \emph{Saecular persistence}, 2021, Preprint arXiv.2112.04927.

\bibitem{hang_correspondence_2023}
Haibin Hang and Washington Mio, \emph{Correspondence modules and persistence sheaves: a unifying perspective on one-parameter persistent homology}, Japan Journal of Industrial and Applied Mathematics \textbf{40} (2023), no.~1, 41--93 (en).

\bibitem{IgusaRockTodorov2023}
Kiyoshi Igusa, Daisie Rock, and Gordana Todorov, \emph{Continuous quivers of type a (i) foundations}, Rendiconti del Circolo Matematico di Palermo Series 2 \textbf{72} (2023), 833--868.

\bibitem{Luo2023}
Jiajie Luo and Gregory Henselman-Petrusek, \emph{Interval decomposition of persistence modules over a principal ideal domain}, Foundations of Computational Mathematics (2025), \url{https://doi.org/10.1007/s10208-025-09732-8}.

\bibitem{conley_integers}
Konstantin Mischaikow and Charles Weibel, \emph{Computing the {C}onley index: A cautionary tale}, SIAM Journal on Applied Algebra and Geometry \textbf{7} (2023), no.~4, 809--827.

\bibitem{obayashi_field_2023}
Ippei Obayashi and Michio Yoshiwaki, \emph{Field choice problem in persistent homology}, Discrete {\&} Computational Geometry \textbf{70} (2023), no.~3, 645--670.

\bibitem{oudot_persistence_2015}
Steve~Y. Oudot, \emph{Persistence theory: From quiver representations to data analysis}, Mathematical {Surveys} and {Monographs}, vol. 209, American Mathematical Society, Providence, Rhode Island, 2015.

\bibitem{patel_generalized_2018}
Amit Patel, \emph{Generalized persistence diagrams}, Journal of Applied and Computational Topology \textbf{1} (2018), no.~3, 397--419 (en).

\bibitem{perea_multiscale_2018}
Jose~A. Perea, \emph{Multiscale projective coordinates via persistent cohomology of sparse filtrations}, Discrete \& Computational Geometry \textbf{59} (2018), no.~1, 175--225 (en).

\bibitem{Polterovich2020}
Leonid Polterovich, Daniel Rosen, Karina Samvelyan, and Jun Zhang, \emph{Topological persistence in geometry and analysis}, University Lecture Series, vol.~74, American Mathematical Society, Providence, RI, 2020.

\bibitem{walch_torsion_2025}
Maria Walch, \emph{Torsion in persistent homology and neural networks}, 2025, Preprint arXiv:2506.03049.

\end{thebibliography}

\end{document}